\documentclass[11pt]{article}

\usepackage{newtxtext}
\usepackage{authblk}
\usepackage{natbib}%
\usepackage[figuresright]{rotating}

\usepackage{url}
\usepackage{graphicx}
\usepackage{tikz}
\usepackage{pgfplots}
\pgfplotsset{compat=1.15}

\usetikzlibrary{positioning}

\usepackage{amsmath,amssymb,amsthm}

\usepackage{algorithm}
\usepackage{algorithmic}

\usepackage{siunitx}
\usepackage{import}

\usepackage{amsmath,amsthm}

\theoremstyle{definition}

\theoremstyle{remark}

\usepackage[margin=1in,footskip=0.25in]{geometry}
\usepackage{setspace}              % Line spacing
\usepackage{graphicx}              % \includegraphics
\usepackage{adjustbox}             % Adjust box dimensions
\usepackage{subcaption}           % Subfigures
\usepackage{caption}              % Captions customization
\usepackage{tikz}                 % TikZ graphics
\usetikzlibrary{positioning, shapes}
\usetikzlibrary{calc} 

\usepackage[table]{xcolor}
\usepackage{booktabs}
\usepackage{diagbox}
\usepackage{makecell}
\usepackage{tabularx}
\usepackage{multirow}
\usepackage{multicol}
\usepackage{threeparttable}       % For wide tables
\usepackage{nicematrix}           % Enhanced matrices
\usepackage{rotating}             % Rotated elements in tables

\usepackage{amsmath,amssymb,amsthm}
\allowdisplaybreaks[1]
\usepackage{siunitx}              % SI units
\usepackage[detect-all]{siunitx}
\usepackage{calc}                 % Width calculations

\usepackage{comment}
\usepackage{enumitem}
\usepackage{xspace}
\usepackage{textcomp}
\usepackage{csquotes}
\usepackage{ifthen}
\usepackage{soul}                 % Underlining & highlighting
\usepackage{xcolor}
\usepackage{xfp}                  % Fixed point arithmetic

\usepackage{varioref}
\usepackage{theoremref}
\usepackage[nottoc]{tocbibind}
\usepackage{lineno}
\usepackage{hyperref}
\usepackage[capitalise,noabbrev]{cleveref}

\usepackage{macros} % Your custom macros

\newtheorem{proposition}{Proposition}

\tikzset{
    dot/.style={draw,circle,inner sep=0pt,minimum size=2.5mm}
}

\title{The Quadratic Bin Packing Problem: \\ Exact Formulations and Algorithm}

\author[1]{Vítor G. Chagas\thanks{Email: \url{vitor.chagas@ic.unicamp.br} (corresponding author)}}
\author[2]{Alberto Locatelli\thanks{Email: \url{alberto.locatelli@unimore.it}}}
\author[1]{Flávio K. Miyazawa\thanks{Email: \url{fkm@ic.unicamp.br}}}
\author[2]{Manuel Iori\thanks{Email: \url{manuel.iori@unimore.it}}}

\affil[1]{\textit{Institute of Computing, University of Campinas, Campinas, SP, Brazil}}
\affil[2]{\textit{DISMI, University of Modena and Reggio Emilia, Reggio Emilia, Italy}}

\date{}

\begin{document}

\maketitle

%Abstract
\begin{abstract}
In this article, we introduce and study the Quadratic Bin Packing Problem (QBPP), which  generalizes the classical bin packing problem by introducing a fixed cost for each used bin and a pairwise cost (or profit) incurred whenever two items are packed together. Beyond its theoretical relevance, the QBPP is of practical interest due to its numerous real-world applications, mainly related to cluster analysis. To address the QBPP, we propose three compact mixed-integer linear programming (MILP) formulations, along with a set-partitioning formulation. For each compact model, we present an enhanced version with a strengthened continuous relaxation, while, for the set-partitioning formulation, we develop a tailored Branch-and-Price algorithm. Computational experiments on benchmark instances demonstrated that, while the enhanced compact formulations can be effectively solved by a standard MILP solver for small-sized instances, the Branch-and-Price approach delivered superior performance overall, especially on larger and more challenging instances.
\end{abstract}

\paragraph{Keywords} Quadratic bin packing; exact algorithms; branch-and-price; column generation; cluster analysis

\section{Introduction}
\label{sec:Intro}
The area of bin packing problems represents a classical and active research area in combinatorial optimization, with applications across a broad range of scientific disciplines (see, e.g., \citealt{Dyckhoff1990145}). In the classical Bin Packing Problem (BPP) (see, e.g., \citealt{kantorovich1960mathematical}), one is given a set $N = \{1,2,\dots,n\}$ of items, each characterized by a positive weight $w_i$ ($i\in N$), and an unlimited number of identical bins, each with capacity $W$. The objective is to partition the items into the minimum number of bins such that the total weight of the items assigned to any bin does not exceed its capacity. As there is a vast literature on the BPP, we refer the interested reader to the survey by \citet{delorme2016}. More recent studies published after this survey have further advanced both modeling and solution approaches, including enhanced pseudo-polynomial formulations (see, e.g., \citealt{delorme2020,deLima2023813}) and Branch-and-Price (B\&P) techniques (see, e.g., \citealt{Baldacci2024141,da2025solving}).

This article introduces and studies a generalization of the BPP, referred to as the Quadratic Bin Packing Problem (QBPP). 
In the QBPP, each bin has a fixed cost $\alpha$ associated with its use and each unordered pair of distinct items $\{i,j\}$ is associated with a dissimilarity value $d_{ij}$  ($i,j \in N$), which represents a penalty (if $d_{ij}>0$) or a profit (if $d_{ij}\leq 0$) incurred when both items are packed into the same bin. The QBPP consists in assigning each item to a bin without exceeding its capacity, with the objective of minimizing the overall cost function, given by: (i) the total cost of the bins used, plus  
(ii) the sum of the dissimilarity values associated with all pairs of items that are packed into the same bin. The problem can be modeled as the following binary quadratic program:
\begin{align}
(\mathrm{F_{QP}}) \quad \min \quad & \alpha \sum_{k \in N} y_k + \sum_{k\in N}\sum_{i\in N}\sum_{j\in N:i<j}d_{ij} x_i^k x_j^k \label{fqp_obj} \\
\text{s.t.} \quad 
& \sum_{k \in N} x_i^k = 1 && i \in N\label{fqp_constr_partition} \\
& \sum_{i \in N} w_i x_i^k \leq W y_k && k \in N \label{fqp_constr_capacity} \\
& x_i^k \in \{0,1\} && i, k \in N \label{fqp_constr_binary_x}\\
& y_k \in \{0,1\} && k \in N, \label{fqp_constr_binary_y}
\end{align}
where variable $x_i^k$ takes the value $1$ if and only if item $i$ is packed in bin $k$, while $y_k$ takes the value $1$ if and only if bin $k$ is used.  
The objective function~\eqref{fqp_obj} minimizes the overall cost.  
Constraints~\eqref{fqp_constr_partition} ensure that each item is assigned to exactly one bin, while constraints~\eqref{fqp_constr_capacity} impose that the capacity of any used bin is not exceeded. Constraints \eqref{fqp_constr_binary_x} and \eqref{fqp_constr_binary_y} define the domain of the variables. As the BPP is strongly NP-hard (see, e.g., \citealt{CoffmanGareyJohnson1996}) and arises as a special case of the QBPP when $\alpha = 1$ and $d_{ij} = 0$ ($i,j \in N$), the QBPP is also strongly NP-hard. 

The problem most closely related to the QBPP is the Quadratic Multiple Knapsack Problem (QMKP), originally proposed by~\citet{Hiley2006547}. The QMKP consists in packing items into a fixed number of knapsacks such that the total weight assigned to each knapsack does not exceed its capacity. The objective is to maximize an overall profit function given by the sum of: (i) the individual profits of the packed items, and (ii) the quadratic profit terms associated with all pairs of items packed together in the same knapsack.
Despite similarities, several key differences distinguish the QMKP from the QBPP. Unlike the QBPP, the QMKP: (i) involves a fixed number of knapsacks, (ii) does not require all items to be packed, (iii) assumes that all profits are non-negative (i.e., $d_{ij} \leq 0$; $i,j\in N$), and (iv) may allow knapsacks with different capacities, although some studies (e.g.,~\citealt{Hiley2006547}) focus on the equal-capacity case.
Over the past two decades, the QMKP has been addressed mainly through heuristic and metaheuristic approaches (see, e.g.,~\citealt{Hiley2006547,Chen2015101,Qin2016199,Galli202336}), while exact solution methods such as B\&P and Branch-and-Bound (B\&B) algorithms have emerged only in more recent years (see, e.g.,~\citealt{QMKP-bergman2019,Fleszar202289}). 

Another problem closely related to the QBPP is the Bin Packing Problem with Conflicts (BPPC), a variant of the classical BPP in which certain pairs of items are forbidden from being packed together. Introduced by \citet{1BPPC-jansen1997}, the BPPC is a special case of the QBPP in which $\alpha = 1$ and $d_{ij} = +\infty$ for each conflicting pair $\{i,j\}$ of items, and $d_{ij} = 0$ otherwise. For the BPPC, a variety of lower and upper bounding techniques have been proposed (see, e.g.,~\citealt{Gendreau2004347, Khanafer2010281, Capua2018667}). With regard to exact solution methods, different B\&P algorithms have been introduced (see, e.g.,~\citealt{1BPPC-muritiba2010, 1BPPC-elhedhli2011, 1BPPC-sadykov2013}).

Beyond its theoretical relevance, the QBPP is of practical importance due to its numerous real-world applications, particularly in Cluster Analysis (CA). According to~\cite{theodoridis2006pattern}, CA is an unsupervised learning approach aimed at identifying natural groupings of a set of objects (i.e., items), each characterized by a set of attributes that induces a measure of (dis)similarity among objects. The objective of CA is to partition the objects into disjoint clusters (i.e., bins) so that those within the same cluster are as homogeneous as possible (i.e., the total dissimilarity among objects belonging to the same clusters is minimized). CA arises in a wide range of application domains, including data analysis, graph theory, logistics, facility location, and pattern recognition, among others (see, e.g.,~\citealt{Hansen1997191,Jain1999264}). 

To maintain consistency with the bin packing terminology, from this point onward we refer to an object as an item and to a cluster as a bin. Similar to the BPP, in certain problems arising in CA, such as the node-capacitated graph partitioning problem (introduced by~\citealt{Ferreira1996247}) or capacitated clustering problems (introduced by~\citealt{Mulvey1984339}), each item is assigned a weight, and each bin is subject to a capacity constraint that limits the total weight of the items it contains.
Unlike the classical BPP, these problems involve a fixed number of bins, and the objective is to find a bin assignment that is internally homogeneous and/or well separated. Such problems can be viewed as a special case of the QBPP in which the number of used bins is fixed and $\alpha = 0$. In contrast, the objective of the BPP is to minimize the number of bins used, which, as already noted, corresponds to a special case of the QBPP where $\alpha = 1$ and $d_{ij} = 0$ ($i,j \in N$). To the best of our knowledge, the QBPP is the first problem that generalizes these problems by combining their objectives.
In this context, the bin cost $\alpha$ plays a crucial role, strongly influencing the structure of optimal QBPP solutions. Large values of $\alpha$ penalize the use of multiple bins, favoring solutions that pack items into as few bins as possible, even when highly dissimilar items are packed together. Conversely, when $\alpha$ is small, using additional bins may be advantageous, promoting solutions in which items within each bin are as similar as possible.

In this work, we propose three compact mixed integer linear programming (MILP) formulations and enhance them by means of valid inequalities and reduction techniques. We also propose a set-partitioning formulation, which we solve through a tailored B\&P algorithm. In the B\&P, the subproblem to be solved in the column generation phase generalizes the Quadratic Knapsack Problem (QKP) (see, e.g., \citealt{CILS2022_II}) by allowing both linear and quadratic terms of the objective function to be negative.  Since existing algorithms for the QKP (see, e.g., \citealt{GALLI20251}) rely on the non-negativity of these terms, we propose an efficient constructive heuristic capable of quickly generating multiple high-quality solutions for the subproblem, complemented by a tailored exact combinatorial B\&B algorithm.

The remainder of the paper is organized as follows. Section~\ref{sec:mathematical_formulations} presents the compact MILP formulations and their strengthened versions, along with the set-partitioning formulation. Section~\ref{sec:branch-and-price} describes the proposed B\&P algorithm, including the column generation procedure and both heuristic and exact approaches specifically designed for solving the pricing problem. Computational experiments and their results are reported and analyzed in Section~\ref{sec:experiments}. Finally, Section~\ref{sec:conclusion} concludes the paper and discusses directions for future research.

\section{Linear Formulations}
\label{sec:mathematical_formulations}

In this section, we present three compact MILP formulations for the $\QBPP$, called $\mathrm{F_{FGW}}$, $\mathrm{F_{2A}}$, and $\mathrm{F_{R}}$. For each formulation, we also provide an enhanced version with a strengthened continuous relaxation, called $\mathrm{F_{EFGW}}$, $\mathrm{F_{E2A}}$, and $\mathrm{F_{ER}}$, respectively. All these formulations can be solved directly using standard MILP solvers.
We then introduce a set-partitioning formulation, namely $\mathrm{F_{SP}}$, which, due to its exponential number of variables, is addressed by means of a B\&P technique described later in Section~\ref{sec:branch-and-price}.

\subsection{Fortet–Glover–Woolsey Formulation}
\label{sec:classical_linear_formulation}

The binary quadratic formulation $\mathrm{F_{QP}}$~\eqref{fqp_obj}--\eqref{fqp_constr_binary_y} can be linearized using the classical approach introduced in the early Sixties by \citet{fortet1960} for transforming algebraic functions in binary variables into linear functions. More than a decade later, the same approach was proposed, apparently independently, also by \citet{glover1974}. To apply this method to the $\QBPP$, let us replace each quadratic term~$x_i^k x_j^k$ in~\eqref{fqp_obj} with an auxiliary three-index binary variable $\hat{x}_{ij}^k$ ($i,j,k \in N$, $i < j$), which takes the value~1 if and only if both items $i$ and $j$ are packed into bin~$k$ (i.e., if only if $x_i^k x_j^k = 1$). The Fortet–Glover–Woolsey formulation ($\mathrm{F_{FGW}}$) is obtained by introducing in $\mathrm{F_{QP}}$, for each auxiliary variable $\hat{x}_{ij}^k$, the corresponding linear linking constraints, as follows:
\begin{align} (\mathrm{F_{FGW}}) \quad \min \quad & \alpha \sum_{k \in N} y_k + \sum_{k\in N}\sum_{i\in N}\sum_{j\in N:i<j} d_{ij} \hat{x}_{ij}^k \label{ffgw_obj} \\ 
\text{s.t.} \quad & \eqref{fqp_constr_partition}-\eqref{fqp_constr_binary_y} \nonumber\\ 
& \hat{x}_{ij}^k \leq x_i^k && i,j,k \in N, \ i<j \label{ffgw_constr_1} \\ 
& \hat{x}_{ij}^k \leq x_j^k && i,j,k \in N, \ i<j \label{ffgw_constr_2} \\ 
& x_i^k + x_j^k - \hat{x}_{ij}^k \leq 1 && i,j,k \in N, \ i<j \label{ffgw_constr_3} \\ 
&  \hat{x}_{ij}^k \in \{0,1\} && i,j,k \in N, \ i<j.\label{ffgw_constr_bounds} \end{align}
Constraints \eqref{ffgw_constr_1} and \eqref{ffgw_constr_2} ensure that $\hat{x}_{ij}^k$ takes the value~0 if $x_i^k x_j^k = 0$, while constraints \eqref{ffgw_constr_3} force $\hat{x}_{ij}^k$  to take the value~1 if both $x_i^k$ and $x_j^k$ are equal to~1. 

The applied linearization technique increases the size of $\mathrm{F_{QP}}$ by introducing $\frac{n^2(n-1)}{2}$ additional variables and $2n^2(n-1)$ additional constraints. However, some simplifications in $\mathrm{F_{FGW}}$ are possible. First, all auxiliary variables $\hat{x}_{ij}^k$ with $d_{ij}=0$, together with their linking constraints, can be removed. Moreover, the variable domain constraints \eqref{ffgw_constr_bounds} can be replaced by $\hat{x}_{ij}^k \ge 0$ ($i,j,k \in N$, $i<j$). Additional reductions can be obtained by eliminating redundant constraints. In particular, for any $i,j,k \in N$ with $i<j$, constraints \eqref{ffgw_constr_3} and  \eqref{ffgw_constr_bounds} are redundant if $d_{ij}<0$, whereas constraints~\eqref{ffgw_constr_1} and~\eqref{ffgw_constr_2} are redundant if $d_{ij}>0$. 

These simplifications follow from the observation that if $d_{ij}\le 0$, then in any optimal solution $(x^*,y^*,\hat{x}^*)$ of $\mathrm{F_{FGW}}$, each variable $\hat{x}_{ij}^{*k}$ takes the largest value allowed by \eqref{ffgw_constr_1} and \eqref{ffgw_constr_2}, namely $\hat{x}_{ij}^{*k}=\min\{x_i^{*k}, x_j^{*k}\}\in\{0,1\}$. Consequently, $\hat{x}_{ij}^{*k}$ automatically satisfies constraints \eqref{ffgw_constr_3} and \eqref{ffgw_constr_bounds}.
On the other hand, if $d_{ij}\geq 0$, each variable $\hat{x}_{ij}^{*k}$ takes the smallest non-negative value allowed by \eqref{ffgw_constr_3}, i.e., $\hat{x}_{ij}^{*k}=\max\{0, x_i^{*k} + x_j^{*k} -1 \}=\min\{x_i^{*k}, x_j^{*k}\} \in \{0,1\}$. As a result, $\hat{x}_{ij}^{*k}$ is binary and satisfies constraints  \eqref{ffgw_constr_1} and \eqref{ffgw_constr_2}.

Moreover, formulation $\mathrm{F_{FGW}}$ admits a large number of symmetric solutions. Indeed, given any feasible solution of $\mathrm{F_{FGW}}$, any permutation of the bins leads to another feasible solution with the same objective value. To reduce this redundancy, let us replace variable $y_k$ with $x_k^k$ ($k\in N$) and introduce the following  symmetry-breaking inequalities surveyed by \citet{delorme2016}:
\begin{align}
& x_i^k = 0 && i, k \in N,\ i <k \label{eq:constr_x_symm1} \\
& x_j^i \le x_i^i && i, j \in N,\ i< j . \label{eq:constr_x_symm2}
\end{align}
Constraints~\eqref{eq:constr_x_symm1} force item $i$ to be packed only in the first $i$ bins, while constraints~\eqref{eq:constr_x_symm2} ensure that items with index greater than $i$ can be placed in bin $i$ only if item $i$ is also packed in bin $i$.
The same symmetry-breaking approach has been used for other BPPs, including the ordered open-end BPP (see, e.g., \citealt{ceselli2008}) and the BPP with fragility objects  (see, e.g., \citealt{CLAUTIAUX201473,daSilva2025}).
\subsection{Extended Fortet–Glover–Woolsey Formulation}
\label{sec:reformulation_linearization}

To strengthen the continuous relaxation of $\mathrm{F_{FGW}}$, we adopt the classical Reformulation Linearization Technique (RLT), originally introduced by \citet{adams1986} for quadratic binary programs and later extended to general binary programs by \citet{adams1990}.
In order to apply RLT, we introduce into $\mathrm{F_{FGW}}$ additional quadratic constraints obtained by multiplying the original capacity constraints \eqref{fqp_constr_capacity} by the variable $x_j^k$ and by its complement $1 - x_j^k$ ($j,k \in N$). Noting that the square of a binary variable equals the variable itself, we obtain the following two families of quadratic constraints:
\begin{align}
&\sum_{i\in N:i\neq j} w_i x_{i}^{k}x_j^k \leq W x_k^k x_j^k  - w_j x_{j}^{k}
&& j,k\in N \label{RL_constr_capacity_quad_1}\\
&\sum_{i \in N : i \neq j} w_i (x_i^k - x_{i}^k x_{j}^k) \leq W(x_k^k - {x}_{k}^k {x}_{j}^k)
&& j,k\in N.\label{RL_constr_capacity_quad_2}
\end{align}
To linearize constraints \eqref{RL_constr_capacity_quad_1} and \eqref{RL_constr_capacity_quad_2}, we introduce the binary variables $\tilde{x}_{ij}^k$ ($i,j,k \in N$, $i \neq j$). These variables extend the variables $\hat{x}_{ij}^k$ ($i,j,k \in N, \ i<j$) to all ordered pairs $(i,j)$ with $i \neq j$. Specifically, $\tilde{x}_{ij}^k$ is equal to $x_i^k x_j^k$ ($i,j,k \in N$, $i \neq j$) and therefore takes the value 1 if and only if both items $i$ and $j$ are packed into bin~$k$. The resulting $\mathrm{F_{EFGW}}$ formulation is then:

\begin{align} (\mathrm{F_{EFGW}}) \quad \min \quad & \alpha \sum_{k \in N} x_k^k + \sum_{k\in N}\sum_{i\in N}\sum_{j\in N:i<j} d_{ij} \tilde{x}_{ij}^k \label{rl_obj} \\ \text{s.t.} \quad & \eqref{fqp_constr_partition}-\eqref{fqp_constr_binary_x}, \eqref{eq:constr_x_symm1},  \eqref{eq:constr_x_symm2} \nonumber\\ 
&\sum_{i\in N:i\neq j} w_i \tilde{x}_{ij}^{k}\leq W \tilde{x}_{kj}^k - w_j x_{j}^{k}
&& j,k\in N \label{RL_constr_capacity_lin_1}\\
&\sum_{i \in N : i \neq j} w_i (x_i^k - \tilde{x}_{ij}^k) \leq W(x_k^k - \tilde{x}_{kj}^k)
&& j,k\in N\label{RL_constr_capacity_lin_2}\\
& \tilde{x}_{ij}^k \leq x_i^k && i,k \in N, j\in N\setminus\{i\} , d_{ij}<0 \label{rl_constr_1} \\
& x_i^k + x_j^k - \tilde{x}_{ij}^k \leq 1 && i,k \in N, j\in N\setminus\{i\} , d_{ij}>0  \label{rl_constr_3} \\ 
& \tilde{x}_{ij}^k =\tilde{x}_{ji}^k  && i,j,k \in N, \ i < j \label{rl_constr_4} \\
&  \tilde{x}_{ij}^k \geq 0 && i,k \in N, j\in N\setminus\{i\}, d_{ij}\neq 0.\label{rl_constr_bounds} \end{align}
Constraints \eqref{RL_constr_capacity_lin_1} and \eqref{RL_constr_capacity_lin_2} are obtained by replacing each quadratic term $x_i^k x_j^k$ in \eqref{RL_constr_capacity_quad_1} and \eqref{RL_constr_capacity_quad_2}, respectively,  with the auxiliary binary variable $\tilde{x}_{ij}^k$ ($i,j,k \in N$, $i \neq j$), while constraints \eqref{rl_constr_1}-\eqref{rl_constr_bounds} are the linking and domain constraints for the auxiliary binary variable $\tilde{x}_{ij}^k$.

\subsection{Two-index Assignment Formulation}
\label{sec:two_index_assignment_formulation}

Models $\mathrm{F_{FGW}}$ and $\mathrm{F_{EFGW}}$ have $\bigO(n^3)$ variables and constraints. A more compact linearization of $\mathrm{F_{QP}}$, with $\bigO(n^2)$ variables and $\bigO(n^3)$ constraints, can be obtained by following a classical approach used for general partitioning problems  (see, e.g., \citealt{fan2010}).
The idea is to retain the item–bin assignment variables $x_j^k$ ($j,k \in N$) of $\mathrm{F_{QP}}$  and  replace each quadratic term $\sum_{k \in N} x_i^k x_j^k$ in \eqref{fqp_obj} with an auxiliary two-index binary variable $z_{ij}$ ($i,j \in N$, $i < j$), which takes the value 1 if and only if items $i$ and $j$ are packed in the same bin  (i.e., $z_{ij} = 1$ if and only if $\sum_{k \in N} x_i^k x_j^k = 1$).  The resulting Two-index Assignment Formulation  ($\mathrm{F_{2A}}$) is then:
\begin{align} (\mathrm{F_{2A}}) \quad \min \quad & \alpha \sum_{k \in N} y_k + \sum_{i \in N}\sum_{j \in N: i<j} d_{ij} \, z_{ij} \label{fnc_obj} \\
\text{s.t.} \quad & \eqref{fqp_constr_partition} - \eqref{fqp_constr_binary_y} \nonumber \\
 & z_{ij} + x_i^k - x_j^k \le 1 && i,j,k \in N,\ i<j \label{fnc_constr_1} \\
& z_{ij} - x_i^k + x_j^k \le 1 && i,j,k \in N,\ i<j \label{fnc_constr_2} \\
& -z_{ij} + x_i^k + x_j^k \le 1 && i,j,k \in N,\ i<j \label{fnc_constr_3} \\
&  z_{ij} \in \{0,1\} && i,j \in N,\ i<j. \label{fnc_constr_bounds_z} \end{align}
Constraints \eqref{fnc_constr_3} ensure that $z_{ij}$ takes the value 1 if items $i$ and $j$ are packed in the same bin, while constraints \eqref{fnc_constr_1} and \eqref{fnc_constr_2} ensure that $z_{ij}$ takes the value 0 otherwise.

Following the same reasoning as for $\mathrm{F_{FGW}}$, some simplifications can be applied to $\mathrm{F_{2A}}$. Specifically, all variables $z_{ij}$ with $d_{ij}=0$, together with their linking constraints, can be removed; the domain constraints \eqref{fnc_constr_bounds_z} can be replaced by $z_{ij} \ge 0$; constraints \eqref{fnc_constr_3} and \eqref{fnc_constr_bounds_z} when $d_{ij}<0$, as well as constraints \eqref{fnc_constr_1} and \eqref{fnc_constr_2} when $d_{ij}>0$, can be removed. Furthermore, as discussed in Section \ref{sec:classical_linear_formulation}, we can replace variable $y_k$ with $x_{k}^k$ ($k \in N$) and introduce in $\mathrm{F_{2A}}$ the symmetry-breaking inequalities \eqref{eq:constr_x_symm1} and \eqref{eq:constr_x_symm2} to reduce the large number of equivalent solutions.

\subsection{Extended Two-index Assignment Formulation}
\label{sec:aggregated_reformulation_linearization}

To strengthen the continuous relaxation of $\mathrm{F_{2A}}$ while keeping $\bigO(n^2)$ variables, we aggregate constraints \eqref{RL_constr_capacity_quad_1} and \eqref{RL_constr_capacity_quad_2}, obtaining, respectively, the following two families of surrogate quadratic constraints:
\begin{align}
&\sum_{k \in N}\sum_{i\in N:i\neq j} w_i x_{i}^{k}x_j^k \leq W \sum_{k \in N}x_k^k x_j^k  - w_j \sum_{k \in N}x_{j}^{k}
&& j\in N \label{RL_constr_capacity_quad_1_surr}\\
&\sum_{k \in N} \sum_{i \in N : i \neq j} w_i (x_i^k - x_{i}^k x_{j}^k) \leq W\sum_{k \in N} (x_k^k - {x}_{k}^k {x}_{j}^k)
&& j\in N.\label{RL_constr_capacity_quad_2_surr}
\end{align}
By replacing each quadratic term $x_i^k x_j^k$ in \eqref{RL_constr_capacity_quad_1_surr} and \eqref{RL_constr_capacity_quad_2_surr} with $\tilde{x}_{ij}^k$, and using the identities $\sum_{k \in N} \tilde{x}_{ij}^k = z_{ij}$ and $\sum_{k \in N} x_j^k = 1$, we can linearize \eqref{RL_constr_capacity_quad_1_surr} and \eqref{RL_constr_capacity_quad_2_surr} as:
\begin{align}
&\sum_{i \in N : i \neq j} w_i z_{ij}
    \leq W \sum_{k \in N} \tilde{x}_{kj}^k - w_j
&& j \in N \label{eq:constr_rl_lin_aggr1}\\
&\sum_{i \in N : i \neq j} w_i(1- z_{ij})\leq W \sum_{k \in N} (x_k^k - \tilde{x}_{kj}^k)
&& j \in N, \label{eq:constr_rl_lin_aggr2}
\end{align}
respectively.
The $\mathrm{F_{E2A}}$ formulation is obtained from $\mathrm{F_{2A}}$ by applying the simplifications described in Section~\ref{sec:two_index_assignment_formulation} and by adding inequalities~\eqref{eq:constr_rl_lin_aggr1} and~\eqref{eq:constr_rl_lin_aggr2}. As these inequalities involve the auxiliary two-index variables $\tilde{x}_{kj}^k$, such variables must be introduced together with their associated linear linking constraints, as follows:
\begin{align} (\mathrm{F_{E2A}}) \quad \min \quad & \alpha \sum_{k \in N} x_k^k + \sum_{i \in N}\sum_{j \in N: i<j} d_{ij} \, z_{ij} \label{arl_obj} \\
\text{s.t.} \quad & \eqref{fqp_constr_partition}-\eqref{fqp_constr_binary_x}, \eqref{eq:constr_x_symm1},  \eqref{eq:constr_x_symm2}, \eqref{eq:constr_rl_lin_aggr1}- \eqref{eq:constr_rl_lin_aggr2} \nonumber \\
& z_{ij} + x_i^k - x_j^k \le 1 && i,j,k \in N,\ i<j, d_{ij}<0  \label{arl_constr_1} \\
& z_{ij} - x_i^k + x_j^k \le 1 && i,j,k \in N,\ i<j, d_{ij}<0  \label{arl_constr_2} \\
& -z_{ij} + x_i^k + x_j^k \le 1 && i,j,k \in N,\ i<j, d_{ij}>0  \label{arl_constr_3} \\
& \tilde{x}_{kj}^k \leq x_k^k && j,k \in N, d_{kj}<0 \label{arl_constr_4} \\
& x_k^k + x_j^k - \tilde{x}_{kj}^k \leq 1 && j,k \in N, d_{kj}>0  \label{arl_constr_5} \\ 
& \tilde{x}_{kj}^k =\tilde{x}_{jk}^k  && j,k \in N \label{arl_constr_6} \\
&  \tilde{x}_{kj}^k \geq 0 && j,k \in N\\
&  z_{ij} \geq 0&& i,j \in N,\ i<j, d_{ij}\neq 0.
\end{align}

\subsection{Representative Formulation}
\label{sec:edge-representative_formulation}

Another compact linear formulation, requiring only $\bigO(n^2)$ variables, can be obtained by introducing representative variables $r_j$ instead of the item–bin assignment variables $x_j^k$ ($j,k \in N$) used in previous formulations.  
This approach has been successfully applied in the literature on different graph partitioning problems (see, e.g., \citealt{Campelo2008}). The variable $r_j$ takes the value $1$ if and only if item $j$ is the item of smallest index in the bin in which it is packed. The resulting Representative Formulation ($\mathrm{F_{R}}$) for the QBPP reads as follows:
\begin{align}
 (\mathrm{F_{R}}) \quad \min \quad & \alpha \sum_{k \in N} r_k + \sum_{i \in N}\sum_{j \in N: i<j} d_{ij} \, z_{ij} \label{fer_obj} \\
\text{s.t.} \quad 
   &\sum_{j \in N: i<j} w_jz_{ij} \leq W - w_i&& i\in N\label{eq:fer_constr_capacity_repr}\\
& z_{ij} + z_{ik} - z_{jk} \leq 1 
&& i\in N, j,k \in N, i<j<k  \label{eq:constr_triangle1} \\
& z_{ij}  + z_{jk} - z_{ik} \leq 1 
&& i\in N, j,k \in N, i<j<k  \label{eq:constr_triangle2} \\
&  z_{ik} + z_{jk}-z_{ij}  \leq 1 
&& i\in N, j,k \in N, i<j<k  \label{eq:constr_triangle3} \\
& r_j + z_{ij} \leq 1 
&& i,j \in N,i<j \label{eq:fer_constr_repr_leq} \\
& r_j + \sum_{i \in N: i<j} z_{ij} \geq 1 
&& j \in N \label{eq:fer_constr_repr_geq} \\
& r_j \in\{0, 1\} 
&& j \in N \label{eq:fer_constr_int_repr}\\
& z_{ij} \in \{0,1\}
&& i,j \in N, i<j. \label{eq:fer_constr_int_z}
\end{align}
Constraints~\eqref{eq:fer_constr_capacity_repr} impose that the capacity of any used bin is not exceeded.  
Triangle inequalities~\eqref{eq:constr_triangle1}-\eqref{eq:constr_triangle3} guarantee that, for any three distinct items $i,j,k \in N$, if items $i$ and $j$ are packed in the same bin and items $i$ and $k$ are packed in the same bin, then items $j$ and $k$ must also be packed in the same bin. Constraints~\eqref{eq:fer_constr_repr_leq} and~\eqref{eq:fer_constr_repr_geq} ensure that item $j$ is the representative of the bin in which it is packed (i.e., $r_j=1$) if and only if there is no item $i$ packed in that bin such that $i<j$.

\subsection{Extended Representative Formulation}
\label{sec:representative_reformulation_linearization}

To strengthen the continuous relaxation of $\mathrm{F_{R}}$, we introduce the following family of additional quadratic constraints:
\begin{align}
& r_j + \sum_{i \in N: i<j} r_i z_{ij} = 1 && j \in N. \label{eq:constr_erl_quadratic_3}
\end{align}
Constraints~\eqref{eq:constr_erl_quadratic_3} ensure that each item is either the representative item of its bin or it is assigned to a bin whose representative item has a smaller index.
To linearize these quadratic constraints, we introduce auxiliary binary variables $\hat{z}_{ij} = r_i z_{ij}$ ($i,j \in N$,  $i<j$). The resulting $\mathrm{F_{ER}}$ formulation reads as follows:
\begin{align}
(\mathrm{F_{ER}}) \quad 
& \eqref{fer_obj}-\eqref{eq:fer_constr_int_z}  \nonumber \\
& r_j + \sum_{i \in N: i<j} \hat{z}_{ij} = 1 && j \in N\label{eq:constr_rrl_linear_2}\\
& \hat{z}_{ij} \le r_i && i,j \in N, i<j \label{fer_rl_constr_link1} \\
& \hat{z}_{ij} \le z_{ij} && i,j \in N, i<j \label{fer_rl_constr_link2} \\
& r_i + z_{ij} - \hat{z}_{ij} \le 1 && i,j \in N, i<j \label{fer_rl_constr_link3} \\
& 0 \le \hat{z}_{ij} \le 1 && i,j \in N, i<j,\label{fer_rl_constr_bounds}
\end{align}
where constraints~\eqref{eq:constr_rrl_linear_2} are the linearized counterparts of constraints~\eqref{eq:constr_erl_quadratic_3}, respectively, while constraints~\eqref{fer_rl_constr_link1}–\eqref{fer_rl_constr_bounds} are the linking and domain constraints for the introduced auxiliary variables $\hat{z}_{ij}$.

Observe that the role of the variables $x_j^j$ and $x_{ij}^i$, in $\mathrm{F_{E2A}}$, coincides with the role of the variables $r_j$ and $\hat{z}_{ij}$, respectively, in $\mathrm{F_{ER}}$. Indeed, if the variable $x_j^j$ takes the value 1, then item $j$ is the representative item of the bin in which it is packed (i.e., $r_j = 1$). Moreover, if the variable $x_{ij}^i$ takes value 1, then item $j$ is packed in a bin whose representative item is $i$ (i.e., $\hat{z}_{ij} = 1$).

\subsection{Set-Partitioning Formulation}
\label{sec:set_partitioning_formulation}

The Set-Partitioning formulation involves an exponential number of variables, each associated with a {\em pattern}, i.e., a subset of items whose total weight does not exceed the bin capacity~$W$. Let $\mathcal{P} = \{P \subseteq N : \sum_{i \in P} w_i \leq W\}$ denote the set of all feasible patterns. For each pattern $P \in \mathcal{P}$, we define its cost $c_P = \alpha + \sum_{i \in P} \sum_{j \in P : i < j} d_{ij}$, and introduce a binary variable $\lambda_P$, taking the value $1$ if and only if pattern $P$ is selected in the solution. The Set-Partitioning Formulation ($\mathrm{F_{SP}}$) reads as follows:
\begin{align}
(\mathrm{F_{SP}})  \quad \min \quad & \sum_{P \in \mathcal{P}} c_P \lambda_P \label{eq:fbp_obj} \\
\text{s.t.} \quad
& \sum_{P \in \mathcal{P}:i\in P} \lambda_P = 1 &&  i \in N \label{eq:fbp_constr_assigned_item} \\
& \lambda_P \in \{0,1\} &&  P \in \mathcal{P}. \label{eq:fbp_constr_int_lambda}
\end{align}
The objective function~\eqref{eq:fbp_obj} minimizes the total cost of the used patterns, while constraints~\eqref{eq:fbp_constr_assigned_item} ensure that all items are packed exactly once.
Note that if all the dissimilarity values are non-negative (i.e., $d_{ij}\geq 0$, $i,j\in N$), constraints~\eqref{eq:fbp_constr_assigned_item} can be replaced by $\sum_{P \in \mathcal{P}:i\in P} \lambda_P \geq 1$ ($i \in N$), thus leading to a set-covering formulation.

\section{Branch-and-Price Algorithm}
\label{sec:branch-and-price}
In the formulation $\mathrm{F_{SP}}$, the number of variables grows exponentially with respect to~$n$. In practice, $\mathrm{F_{SP}}$ cannot be solved directly using approaches based on the exhaustive enumeration of all patterns (also defined columns from now on). Therefore, in this section, we present a new B\&P algorithm for its solution. The B\&P solves a Master Problem (MP), which is defined as the linear relaxation of $\mathrm{F_{SP}}$ in which the integrality constraints~\eqref{eq:fbp_constr_int_lambda} are replaced by
$\lambda_P \geq 0$ ($ P \in \mathcal{P}$).

At the root node of the B\&P tree, we initialize the Restricted Master Problem (RMP) by selecting, from MP, only the columns corresponding to single-item patterns. Once the RMP is solved to optimality, the resulting dual solution is used to define the pricing problem and search for columns with negative reduced cost (see Section~\ref{sec:pricing_problem}).
As the pricing problem corresponds to a generalization of the classical Quadratic Knapsack Problem ($\QKP$), which is strongly $\classNP$-hard (see, e.g., \citealt{CILS2022_II}), to solve the problem efficiently, we first apply a heuristic algorithm (see Section~\ref{sec:QKP_heuristic}). If no column with negative reduced cost is found, an exact combinatorial algorithm, presented in Section~\ref{sec:QKP_exact}, is then used to solve the pricing problem to optimality. If a column with negative reduced cost is found, it is added to the RMP, and the column generation process is repeated. Otherwise, the current RMP solution $\lambda^*_P$ is optimal for the MP, and its objective value provides a valid lower bound for the QBPP. 
The search tree is then explored according to the branching scheme described in Section~\ref{sec:branching_rules}, and the branching decisions are imposed in the pricing problem of the generated nodes.

\subsection{Column Generation}
\label{sec:pricing_problem}

Solving the RMP yields an optimal solution with respect to the current set of columns, which is in general suboptimal for the full MP. To verify optimality or otherwise identify improving columns, we consider the dual of the MP, given by:
\begin{equation}\label{eq:MP_dual_formulation}
\max \left\{
\sum_{i \in N} \pi_i :\; \sum_{i \in P} \pi_i \leq c_P, \;P \in \mathcal{P}, \; \pi_i \gtreqless 0, \;i \in N
\right\},
\end{equation}
where $\pi_i$ is the dual variable corresponding to the $i$-th  constraint in~\eqref{eq:fbp_constr_assigned_item}. 
Once the RMP is solved to optimality, the corresponding dual solution $\pi^*$ may violate a dual constraint in~\eqref{eq:MP_dual_formulation}, inducing a column with negative reduced cost in the MP. If such a violated dual constraint exists, the column generation (CG) procedure adds the corresponding column to the RMP and re-optimizes it. This process is repeated until no further violated dual constraints occur, i.e., until the current RMP optimal solution is also optimal for the MP.
At each CG iteration, a separation problem, called pricing problem, is solved in order to determine (if any) a pattern $P^*\in\mathcal{P}$ such that $\sum_{i \in P^*} \pi_i^* > c_P$.
Let us denote by $p_{ij} = -d_{ij}$ and
let $z_i$ be a binary variable that takes the value $1$ if and only if item $i\in N$ is in the pattern $P$.
As $c_P = \alpha + \sum_{i \in P} \sum_{j \in P : i < j} d_{ij}$, the pricing problem asks to solve the following Generalized Quadratic Knapsack Problem ($\generalQKP$):
\begin{equation} \label{eq:QKP_formulation}
    z(\pi^* )=\max \left\{ \sum_{i \in N} \pi_i^* z_i + \sum_{i \in N} \sum_{j \in N : i < j} p_{ij} z_i z_j:\; \sum_{i \in N} w_i z_i\leq W,\; z_i\in\{0,1\},\; i \in N \right\}.
\end{equation}
If $z(\pi^*) > \alpha$, a violating pattern $P^* \in \mathcal{P}$, corresponding to an optimal solution of~\eqref{eq:QKP_formulation}, has been found. Thus, variable $\lambda_{P^*}$ has a negative reduced cost and can be added to the RMP. Otherwise, the RMP solution is also optimal for the MP. 

Observe that both $\pi^*_i$ and $p_{ij}$ in \eqref{eq:QKP_formulation} may be  negative, whereas in the literature of the standard $\QKP$ the linear and quadratic terms are typically assumed to be non-negative (see, e.g., \citealt{caprara1999}).
This generalization is crucial, as classical algorithms for the $\QKP$ (see, e.g., \citealt{GALLI20251}) rely on the non-negativity of these terms, and therefore cannot be applied directly to the $\generalQKP$.

\subsubsection{Multiple Constructive Heuristic  for the GQKP} % $\generalQKP$
\label{sec:QKP_heuristic}

This section presents the Multiple Constructive Heuristic (MCH) that we designed to generate multiple high-quality solutions for the pricing problem~\eqref{eq:QKP_formulation}.
The MCH extends the Constructive Heuristic (CH) proposed by \citet{QKP-fomeni2014} for the $\QKP$ by maintaining multiple candidate solutions at each dynamic programming state.

The original CH approach is based on the classical dynamic programming (DP) recursion for the linear Knapsack Problem (KP01), preserving its stages and states while replacing the linear objective function with the quadratic objective function of the $\QKP$.
This approach applied to~\eqref{eq:QKP_formulation} works as follows.
For each $i \in N$ and $w \in \srange{0}{W}$, let $f(i,w)$ denote the profit of the best-found pattern $P(i,w) \subseteq \srange{1}{i}$ such that $\sum_{j \in P(i,w)} w_j = w$. 
With base cases $f(0,0) = 0$ and $f(0,w) = -\infty$ for $w \in \srange{1}{W}$, the DP recursion for the $\generalQKP$ is then defined as:

% \scalebox{0.94}{
\begin{equation}\nonumber
f(i,w) =
\begin{cases}
\max \left\{ f(i-1,w), f(i-1,w-w_i) + p\left(i,\,P(i-1,w-w_i)\right) \right\} & \text{if } w_i \le w, \\
f(i-1,w) & \text{otherwise},
\end{cases}
\end{equation}
% }
%
where $p(i,P(i-1,w-w_i)) = \pi^*_i + \sum_{j \in P(i-1,w-w_i)} p_{ij}$ denotes the marginal quadratic contribution incurred by inserting item $i$ into pattern $P(i-1,w-w_i)$. Since the evaluation of each state $f(i,w)$ requires $\mathcal{O}(n)$ time and there are $nW$ states in total, the overall time complexity of the algorithm is $\mathcal{O}(n^2W)$. 

The proposed MCH generalizes this recursion by retaining multiple solutions per state. Specifically, instead of storing only the best pattern found for each state $(i,w)$, the MCH keeps the $h$ best patterns found so far.
Formally, for $i \in N$,  $w \in \{0,\dots,W\}$, and a given parameter $h$, let $\mathcal{P}^h(i,w)=\{P^j(i,w)\}_{j=1}^{h}$ 
denote the set of the $h$ best patterns found considering the first $i$ items and capacity $w$, and let $\mathcal{F}^h(i,w)=\{f^j(i,w)\}_{j=1}^{h}$ be the corresponding set of values.
Given a finite totally ordered set $(S,\leq)$ and an integer $h \in \{1, \dots, |S|\}$, let $\operatorname{top}^h(S)$ denote the set of the $h$ largest elements of $S$.  When item $i$ is considered for insertion into the patterns in $\mathcal{P}^h(i-1, w-w_i)$, the set $\mathcal{P}^h(i, w)$ is updated by selecting the first $h$ patterns according to the corresponding values in $\mathcal{F}^h(i, w)$, which are computed as follows:

% \scalebox{0.94}{
\begin{equation}\nonumber
\mathcal{F}^h(i,w) =
\begin{cases}
\operatorname{top}^h\left( \mathcal{F}^h(i-1,w)\cup \left\{f^j(i-1,\,w-w_i) + p\left(i,\,P^j(i-1,\,w-w_i)\right)\right\}_{j=1}^h \right) & \text{if } w_i \le w \\
\mathcal{F}^h(i-1,w) & \text{otherwise}.
\end{cases}
\end{equation}
% }

Regarding the time complexity of the MCH, first note that obtaining the candidate values for $\mathcal{F}^h(i,w)$ takes time $\bigO(nh)$. In addition, as the number of candidates is at most $2h$, sorting them requires $\bigO(h \log h)$ time. Thus, obtaining $\mathcal{F}^h(i,w)$ takes time $\bigO(nh + h \log h)$.
Since MCH computes $\mathcal{F}^h(i,w)$ for every pair $(i, w)$, the total time complexity of MCH is then $\bigO(hnW(n+\log(h)))$.

Moreover, as the initial order of the items in $N$ may impact the quality of the obtained solutions (see, e.g., \citealt{QKP-fomeni2014}), based on preliminary experiments, we found that sorting the items in non-increasing order of $(p_i + \sum_{j \neq i} p_{ij})/w_i$ yields the best overall performance.

The MCH offers a twofold advantage over the CH: (i) it potentially enhances the quality of the solution found, and (ii) it enables the generation of multiple columns with negative reduced cost.
As shown below in Section~\ref{sec:experiments}, these two characteristics contribute to improving the performance of the B\&P algorithm.
 
\subsubsection{Exact Algorithm for the GQKP} % $\generalQKP$
\label{sec:QKP_exact}

In case the heuristic fails to obtain a column with negative reduced cost, the pricing problem must be solved exactly in order to either identify an improving column or certify that none exists. To this end, we propose a combinatorial B\&B algorithm to solve the pricing problem~\eqref{eq:QKP_formulation} to optimality.
Each node $v = (I,O,U)$ of the B\&B tree represents a subproblem corresponding to a partial knapsack filling, where $N$ is partitioned into three disjoint subsets: $I$, $O$, and $U$. Specifically, $I$ and $O$ denote the sets of items for which a decision has already been made, with $I$ containing the items packed into the knapsack and $O$ those excluded,  whereas $U$ contains the undecided items that can still be packed into the knapsack. The value of the incumbent solution $z^*$ is initialized using the value of the best solution found by the MCH computed in the current CG iteration.

\paragraph{Bounding functions.}
For each visited node $v = (I,O,U)$  of the B\&B tree, an upper bound $\UB(v)$ and a lower bound $\LB(v)$ on the optimal objective value of the corresponding subproblem are computed. If $\UB(v) \leq z^*$, the node is pruned, while if $\LB(v) > z^*$, the incumbent value is updated. In particular:
\begin{itemize}
    \item $\LB(v)$ is obtained by means of a constructive algorithm that completes the partial knapsack filling $I$ in a greedy manner. At each step of the algorithm, among all items $i \in U$ whose weight $w_i$ does not exceed the residual capacity $W - \sum_{j \in I} w_j$, the algorithm selects the item with the largest positive gain-to-weight ratio $g_i / w_i$, where the gain is defined as $g_i = \pi^*_i + \sum_{j \in I} p_{ij}$. This process is repeated until no remaining item has a positive gain or the residual capacity is insufficient to accommodate any of the remaining items;

    \item $\UB(v)$ is calculated as follows. First, for each item $i \in U$, an upper bound $\overline{p}_i$ on the maximum contribution obtainable by inserting item $i$ into the current partial solution $I$ is computed. Then, an upper bound on the maximum additional contribution achievable by extending $I$ with a pattern $P \in \mathcal{P}$ is obtained by solving $
    \max_{P' \in \mathcal{P}: I \subseteq P'}\set[\big]{ \sum_{j \in P' \cap U} \overline{p}_j}$,
    which added to the total profit value provided by $I$ gives the upper bound $\UB(v)$.

\end{itemize}

Let us describe in better detail how $\UB(v)$ is calculated. Given a pattern $P\in \mathcal{P}$, let $p(P) = \sum_{i \in P} \pi_i^*  + \sum_{i \in P} \sum_{j \in N : i < j} p_{ij}$ denote the total profit value provided by $P$.
Given an item $i \in U$, let $\bbnodekp{v}{i}$ be an instance of the KP01 with capacity $c_{v,i} = W - \sum_{j \in I} w_j - w_i$ 
and item set $U \setminus \set{i}$, each item $j$ with profit $p_{ij}^{+} = \max\set{ 0, p_{ij} }$ and weight $w_j$.
Let $z(\bbnodekp{v}{i})$ denote the optimal value of $\bbnodekp{v}{i}$.
Observe that $\overline{p}_i = \pi_i^* + \sum_{j \in I} p_{ij} + \frac{1}{2} z(\bbnodekp{v}{i})$ represents an upper bound on the maximum contribution that can be achieved by inserting item $i$ into $I$, which leads to the following.

\begin{proposition}\label{UB_QKP_part1}
Given a node $v = (I,O,U)$ of the B\&B tree and a pattern $P$ such that $I \subseteq P$, the inequality $p(P) \leq p(I) + \displaystyle\sum_{i \in P \cap U} \overline{p}_i$ holds.
\end{proposition}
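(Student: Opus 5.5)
The plan is to split the profit of $P$ into three parts: the part coming from $I$, the cross terms between $P\cap U$ and $I$, and the quadratic terms inside $P\cap U$. The last part is then bounded item by item using the knapsack instances $\bbnodekp{v}{i}$. Throughout, $p(P)$ is read as $\sum_{i\in P}\pi_i^*+\sum_{i,j\in P:\,i<j}p_{ij}$, i.e.\ the objective of \eqref{eq:QKP_formulation} evaluated at $P$.

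\textbf{Assumption on $P$.} I assume $P$ is a pattern reachable from node $v$, so that $P\subseteq I\cup U$ and $P\cap O=\emptyset$. This is the natural reading of ``pattern extending the partial filling'', and I will state it explicitly. Without it, the terms involving items of $O$ would not be accounted for on the right-hand side, and the inequality could fail.

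\textbf{Decomposition.} Set $Q=P\cap U$, so $P=I\cup Q$ is a disjoint union. Expanding the objective gives
\begin{equation}\nonumber
p(P)=p(I)+\sum_{i\in Q}\Bigl(\pi_i^*+\sum_{j\in I}p_{ij}\Bigr)+\sum_{i,j\in Q:\,i<j}p_{ij}.
\end{equation}
Symmetrizing the last sum (with $p_{ij}=p_{ji}$) rewrites it as $\frac12\sum_{i\in Q}\sum_{j\in Q\setminus\{i\}}p_{ij}$. Since $p_{ij}\le p_{ij}^+$, this is at most $\frac12\sum_{i\in Q}\sum_{j\in Q\setminus\{i\}}p_{ij}^+$.

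\textbf{Key step.} For each $i\in Q$, the set $Q\setminus\{i\}$ is a feasible solution of $\bbnodekp{v}{i}$:
\begin{itemize}
\item its items lie in $U\setminus\{i\}$;
\item its weight satisfies $\sum_{j\in Q\setminus\{i\}}w_j=\sum_{j\in P}w_j-\sum_{j\in I}w_j-w_i\le W-\sum_{j\in I}w_j-w_i=c_{v,i}$, because $P\in\mathcal{P}$.
\end{itemize}
Hence $\sum_{j\in Q\setminus\{i\}}p_{ij}^+\le z(\bbnodekp{v}{i})$. Substituting this into the decomposition, the bracket for each $i\in Q$ plus half of this quantity is at most $\overline{p}_i$. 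Summing over $i\in Q$ gives $p(P)\le p(I)+\sum_{i\in P\cap U}\overline{p}_i$.

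\textbf{Where care is needed.} The argument is routine once the pieces are in place. The delicate points are:
\begin{itemize}
\item the factor $\frac12$, which works because each unordered pair in $Q$ is counted once from each endpoint;
\item replacing $p_{ij}$ by $p_{ij}^+$, which is needed because profits may be negative and the knapsack relaxation must use nonnegative profits;
\item the feasibility check for $Q\setminus\{i\}$, which uses exactly the residual capacity $c_{v,i}$.
\end{itemize}
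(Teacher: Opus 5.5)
Your proof is correct and follows the paper's argument. Both use the same decomposition of $p(P)$ into $p(I)$, the cross terms with $I$, and the halved symmetric sum over $P\cap U$, then the bound $p_{ij}\le p_{ij}^+$ and a per-item bound by $z(\bbnodekp{v}{i})$. Your version is in fact more careful in two places: you check directly that $(P\cap U)\setminus\{i\}$ is feasible for capacity $c_{v,i}$, whereas the paper writes $z(\bbnodekp{v}{i})$ as a maximum over $P'\supseteq I$ without requiring $i\in P'$. You also state explicitly the assumption $P\subseteq I\cup U$, which the paper's decomposition uses without saying so.
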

\begin{proof}
Note that the total profit of $P$ can be decomposed as $p(P) = p(I)+p(P \cap U) + \displaystyle\sum_{i \in P \cap U} \sum_{j \in I} p_{ij}$. Moreover:
\begin{align*}
   p(P \cap U) + \sum_{i \in P \cap U} \sum_{j \in I} p_{ij} &=\sum_{i \in P \cap U}\paren*{\pi_i^* + \frac{1}{2} \sum_{j \in P \cap U\setminus\{i\}} p_{ij}} + \sum_{i \in P \cap U} \sum_{j \in I} p_{ij}= \\
        &=\sum_{i \in P \cap U}\paren*{\pi_i^* + \sum_{j \in I} p_{ij} + \frac{1}{2} \sum_{j \in P \cap U\setminus\{i\}} p_{ij}} \leq \\
        &\leq
       \sum_{i \in P \cap U}\paren*{\pi_i^* + \sum_{j \in I} p_{ij} + \frac{1}{2} \sum_{j \in P \cap U\setminus\{i\}} p^+_{ij}} \leq\\
        &\leq\sum_{i \in P \cap U}\paren*{\pi_i^* + \sum_{j \in I} p_{ij} + \frac{z(\bbnodekp{v}{i})}{2} }=\sum_{i \in P \cap U} \Bar{p}_i.
\end{align*}
\noindent
The first inequality follows from the fact that $p_{ij} \leq p^+_{ij}$, while the second one holds since $z(\bbnodekp{v}{i})=\displaystyle\max_{P'\in \mathcal{P}:I\subseteq P'}\hspace{-0.2cm}\sum_{j \in P'\cap U \setminus \{i\}} \hspace{-0.5cm} p_{ij}^+$. 
Combining this bound with the initial decomposition,  we obtain that
$p(P) \leq p(I)+\displaystyle\sum_{i \in P \cap U} \Bar{p}_i$, which concludes the proof.
\end{proof}

\begin{proposition}\label{UB_QKP}
 Given a node $v = (I,O,U)$ of the B\&B tree, the value $\UB(v)=p(I)+\displaystyle\max_{P'\in \mathcal{P}:I\subseteq P'}\hspace{-0.2cm}\sum_{j \in P'\cap U } \hspace{-0.2cm} \overline{p}_j$
 provides a valid upper bound on the optimal solution of the subproblem at node $v$.
\end{proposition}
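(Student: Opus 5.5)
The plan is to reduce the statement to Proposition~\ref{UB_QKP_part1} by taking a maximum over the feasible completions of $I$.

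First, identify the subproblem at node $v=(I,O,U)$. Its feasible solutions are exactly the patterns $P\in\mathcal{P}$ with $I\subseteq P$ and $P\cap O=\emptyset$, so $P\subseteq I\cup U$. Its optimal value is therefore $\max\{p(P): P\in\mathcal{P},\ I\subseteq P,\ P\cap O=\emptyset\}$, and it suffices to show that every such $P$ satisfies $p(P)\le \UB(v)$. If the feasible set is empty the claim is vacuous. It cannot actually be empty, since a node exists only if $I$ fits, and then $I$ itself is feasible.

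Second, fix such a feasible $P$. Proposition~\ref{UB_QKP_part1} gives $p(P)\le p(I)+\sum_{i\in P\cap U}\overline{p}_i$. Because $P$ is itself a member of $\{P'\in\mathcal{P}: I\subseteq P'\}$, we have
\[
\sum_{i\in P\cap U}\overline{p}_i \le \max_{P'\in\mathcal{P}:I\subseteq P'}\sum_{j\in P'\cap U}\overline{p}_j .
\]
Adding $p(I)$ to both sides and chaining with the previous inequality yields $p(P)\le\UB(v)$. Maximizing over all feasible $P$ then shows that $\UB(v)$ bounds the subproblem optimum.

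The only point needing care is that the maximization defining $\UB(v)$ runs over a set at least as large as the subproblem's feasible set. Dropping the condition $P'\cap O=\emptyset$ only enlarges the set, and that is harmless for an upper bound. Beyond this inclusion, all the substantive work, namely the per-item bounds $\overline{p}_i$ built from the knapsack instances, is already done in Proposition~\ref{UB_QKP_part1}. I expect no real obstacle; the argument is a relaxation-plus-maximum step.
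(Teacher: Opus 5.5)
Your proposal is correct and matches the paper's proof: apply Proposition~\ref{UB_QKP_part1} to each pattern $P\supseteq I$ that is feasible at node $v$, then bound $\sum_{i\in P\cap U}\overline{p}_i$ by the maximum that defines $\UB(v)$. Your explicit restriction to $P\cap O=\emptyset$ is slightly more careful than the paper's wording, since the decomposition in Proposition~\ref{UB_QKP_part1} holds only for $P\subseteq I\cup U$.
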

\begin{proof}
The claim follows directly from the observation that $\UB(v)$ upper bounds the profit of any feasible pattern extending $I$.
Indeed, for any pattern $P$ such that $I \subseteq P$, Proposition \ref{UB_QKP_part1} implies that
$p(P) \leq p(I) + \displaystyle\sum_{i \in P \cap U} \overline{p}_i\leq p(I)+\displaystyle\max_{P'\in \mathcal{P}:I\subseteq P'}\hspace{-0.2cm}\sum_{j \in P'\cap U } \hspace{-0.2cm} \overline{p}_i=\UB(v)$.  
\end{proof}

Note that the computation of $\UB(v)$ requires solving a KP01 whose profit coefficients $\overline{p}_i$ are themselves obtained by solving the corresponding $\bbnodekp{v}{i}$ ($i \in U$), thus resulting in an overall time complexity of $\bigO(n^2 W)$.
To reduce the computational effort, we relax each problem $\bbnodekp{v}{i}$ ($i \in U$) by considering its fractional knapsack relaxation.
To this end, at the beginning of the B\&B algorithm, for each item $i \in N$ we sort the remaining items $j \in N \setminus \{i\}$ in non-increasing order of the ratio $p_{ij}^+ / w_j$. This preprocessing step requires a total time of $\bigO(n^2 \log n)$. As this ordering is independent of the specific node $v$, the fractional value of $z(KP_{v,i})$, and hence $\overline{p}_i$, can be computed during the execution of B\&B in $\bigO(n)$ time. The resulting time complexity for calculating $\UB(v)$ is therefore $\mathcal{O}(n^2 + nW)$.

\paragraph{Branching Rule and Search Exploration of the B\&B.}
Let $\mathcal{V}$ denote the set of active nodes in the B\&B tree. 
Initially, since all items can potentially be packed into the knapsack, we set 
$\mathcal{V} := \{(\emptyset, \emptyset, N)\}$. 
At each iteration of the B\&B tree exploration, for each unsolved node 
$v=(I,O,U) \in \mathcal{V}$, we first move to $O$ all items in $U$ whose insertion into $I$ would exceed the knapsack capacity. 
If $U = \emptyset$, the node is pruned, and the incumbent solution is updated if $z^* < p(I)$.  
Next, the bounding functions are applied to compute $\LB(v)$ and $\UB(v)$.  
If $\UB(v) \leq z^*$, the node is pruned.  
If $\LB(v) > z^*$, the incumbent solution is updated, and all nodes in $\mathcal{V}$ with an upper bound smaller than $z^*$ are pruned.
Among all nodes in $\mathcal{V}$, the node $v = (I,O,U)$ with the largest upper bound is selected for branching.  
For each item $j \in U$, we compute the upper bounds of the two corresponding child nodes, namely $ub_j^I = \UB(I \cup \{j\}, O, U \setminus \{j\})$ and $ub_j^O = \UB(I, O \cup \{j\}, U \setminus \{j\})$.  
The item chosen for branching is then $j' \in \arg\min_{j \in U} \{\max(ub_j^I, ub_j^O)\}$.  
The two resulting child nodes $(I \cup \{j'\}, O, U \setminus \{j'\})$ and $(I, O \cup \{j'\}, U \setminus \{j'\})$ are added to $\mathcal{V}$, the node $v$ is removed from $\mathcal{V}$, and the exploration of the B\&B tree continues to the next iteration until $\mathcal{V} = \emptyset$.
% \end{itemize}

\subsection{Branching Rule and Search Exploration of the B\&P}
\label{sec:branching_rules}

Let $\Pi$ denote the set of active nodes of the B\&P tree at a given iteration. 
Each node of the search tree is associated with a specific RMP, its optimal solution $\lambda^{*}$, the corresponding lower bound, and a conflict graph~$G_c$.

At the first iteration, we initialize $\Pi := \{\mathrm{RMP}\}$  and set the conflict graph associated with RMP to $G_c = (N,\emptyset)$. 
Then, at each iteration of the search, CG is applied to all nodes currently contained in $\Pi$ that have not yet been solved, in order to compute an optimal solution $\lambda^{*}$ of the corresponding RMP and its associated lower bound. 
The node in $\Pi$ with the smallest lower bound is then selected for branching. If the optimal solution $\lambda^{*}$ of the selected node is integer, then it is also optimal for the QBPP, and the search terminates. 
Otherwise, there exists a pair of items $i,j \in N$ such that 
$0 < \sum_{P \in \mathcal{P} : \{i,j\} \subseteq P} \lambda^{*}_P < 1$.
In this case, the selected node is removed from $\Pi$, the pair of items $(i,j)$ with the most fractional value is identified, and two child nodes are generated following the branching scheme proposed by \citet{ryanfoster1981}:
a $0$-branch, corresponding to the decision of forbidding items $i$ and $j$ from being packed in the same bin, and a $1$-branch, corresponding to the decision of enforcing items $i$ and $j$ to be packed in the same bin.

The $0$-branch decision is imposed in the corresponding subproblem by inheriting from the RMP of the father node only the columns corresponding to patterns that do not contain both items $i$ and $j$.
Additionally, the conflict graph $G_c$ is updated by adding the pair $(i,j)$ to the conflict set of edges, and all the dissimilarity values in~\eqref{eq:QKP_formulation} associated with conflicting pairs in $G_c$  are set to infinity, thus preventing the addition of columns corresponding to patterns containing items in conflict during the CG process.

On the other hand, the $1$-branch decision is imposed in the corresponding subproblem by inheriting from the RMP of the father node either the columns in which items $i$ and $j$ do not occur, or the columns corresponding to patterns that contain both items $i$ and $j$ by merging them into a new item $k$. The new item is defined in the subproblem by setting $w_k = w_i + w_j$ and $d_{kl} = d_{il} + d_{jl}$ for all $l \in N \setminus \{i,j\}$. Additionally, the value of $d_{ij}$ is added to the lower and upper bounds obtained on its corresponding subproblem.

Lastly, the two child nodes generated by the branching process are inserted into $\Pi$, and the search proceeds to the next iteration.

\section{Computational Experiments}
\label{sec:experiments}
The algorithms presented in the previous sections and listed in Section~\ref{sec:algorithms_tested} were implemented in C++20 and executed on a single core of an Intel(R) Xeon(R) CPU E3-1245 v5 processor running at 3.5~GHz, with 32~GB of shared memory, under the Linux Ubuntu 22.04.3 LTS operating system. A time limit of 3600~seconds was imposed for each run.

Since, to the best of our knowledge, the QBPP is a novel problem, Section~\ref{sec:benchmarks} introduces a benchmark set, which is made publicly available to foster future research, while Section~\ref{sec:overall_results} presents and discusses the computational results obtained on this benchmark.

\subsection{Algorithms Tested}\label{sec:algorithms_tested}
The three compact MILP formulations, namely $\mathrm{F_{FGW}}$, $\mathrm{F_{2A}}$, and $\mathrm{F_{R}}$, together with their enhanced versions $\mathrm{F_{EFGW}}$, $\mathrm{F_{E2A}}$, and $\mathrm{F_{ER}}$, presented in Section~\ref{sec:mathematical_formulations}, were solved using CPLEX~22.1.0.
The resulting algorithms are denoted by FGW, 2A, and R, while their enhanced counterparts are denoted by EFGW, E2A, and ER, respectively.
Moreover, we refer to FGW$^{\text{+SB}}$ and 2A$^{\text{+SB}}$ as the implementations of the models $\mathrm{F_{FGW}}$ and $\mathrm{F_{2A}}$, respectively, together with their corresponding symmetry-breaking inequalities. Specifically, FGW$^{\text{+SB}}$ coincides with the implementation of the model having objective function
$\min \alpha \sum_{k \in N} x_k^k + \sum_{k \in N}\sum_{i \in N}\sum_{j \in N: i<j} d_{ij}\hat{x}_{ij}^k$,
subject to constraints \eqref{fqp_constr_partition}–\eqref{fqp_constr_binary_x} and \eqref{ffgw_constr_1}–\eqref{eq:constr_x_symm2}. Similarly, 2A$^{\text{+SB}}$ coincides with the model having objective function
$\min \alpha \sum_{k \in N} x_k^k + \sum_{i \in N}\sum_{j \in N: i<j} d_{ij} z_{ij}$,
subject to constraints \eqref{fqp_constr_partition}–\eqref{fqp_constr_binary_x}, \eqref{eq:constr_x_symm1}, \eqref{eq:constr_x_symm2}, and \eqref{fnc_constr_1}–\eqref{fnc_constr_bounds_z}.

Two versions of the B\&P algorithm of Section~\ref{sec:branch-and-price} were implemented to solve the set-partitioning formulation $\mathrm{F_{SP}}$, namely B\&P$^{(1,1)}$ and B\&P$^{(5,10)}$.
B\&P$^{(1,1)}$ serves as a baseline, with the MCH described in Section~\ref{sec:QKP_heuristic} configured with $k = 1$ and adding at most one column with the lowest negative reduced cost at each CG iteration.
In contrast, B\&P$^{(5,10)}$ sets $k = 5$ and adds up to 10 columns with the lowest negative reduced costs per CG iteration. These values were determined through preliminary tests aimed at finding the best trade-off between solution quality and computational effort.

\subsection{Benchmark Set}
\label{sec:benchmarks}

We generated a set of benchmark instances for the QBPP and made them publicly available at
\url{https://github.com/regor-unimore/Quadratic-Bin-Packing-Problem}.

The instance generation scheme is derived from the one proposed by~\citet{Hiley2006547} for the MQKP. Specifically, each generated QBPP instance is defined by a quadruple $(n, \mu, \delta, \sigma)$, where $n$ is the number of items, $\mu$ controls the relative impact of the two components of the QBPP objective function by defining the bin cost $\alpha$ and the bin capacity $W$ (as explained below), $\delta$ represents the density of nonzero pairwise dissimilarity terms, and $\sigma \in \{-, +, \pm\}$ specifies whether the dissimilarity values are constrained to be non-positive ($-$), non-negative ($+$), or unrestricted ($\pm$).

Similarly to~\citet{Hiley2006547}, item weights are generated independently according to the discrete uniform distribution $\mathcal{U}(1,50)$.
For the pairwise dissimilarity values, if $\sigma = +$ (resp., $\sigma = -$, $\sigma = \pm$), each pair of items is assigned a value independently drawn from $\mathcal{U}(0,100)$ (resp., $\mathcal{U}(-100,0)$, $\mathcal{U}(-50,50)$) with probability $\delta$, and set to zero with probability $1-\delta$.
The bin capacity $W$ is set to $20\%$ of the total weight of all items divided by $\mu$.
Assuming for the moment that $m = \left\lceil \sum_{i \in N} w_i / W \right\rceil$ bins are required to pack all items, let $\bar{n} = n / m$ denote the average number of items per bin. 
Moreover, $\bar{d} = \bar{n}\cdot\sum_{i,j \in N : i<j} d_{ij} / \binom{n}{2} $ represents the average dissimilarity among items within a bin, which is then used to set the bin cost in relation to $\bar{d}$ as $\alpha = \lceil \mu |\bar{d}| \rceil$.
Larger values of $\mu$ favor solutions using fewer bins, even if items are dissimilar, whereas smaller values favor grouping similar items together, possibly at the expense of using more bins. 

The parameter sets are defined as $n \in \{25,30,35,40,45\}$, $\mu \in \{0.6,1,2\}$, $\delta \in \{0.25,0.5,0.75\}$, and $\sigma \in \{-, +, \pm\}$.
All combinations of these parameters were considered, and five instances were generated for each configuration, resulting in a total of $675$ instances.
Specifically,  to better evaluate the impact of the parameter $\mu$, for each fixed combination of $(n,\delta,\sigma)$ we generated groups of 3 related instances that differ only in the value of $\mu$. Instances within the same group share the same set of items, i.e., identical item weights and pairwise dissimilarities, and differ only in the bin capacity $W$ and the bin cost $\alpha$.

Throughout the remainder of the paper, we consider three instance classes according to problem size: small-sized ($n=25$), small- to medium-sized ($n \in \{25,30,35\}$), and large-sized ($n \in \{40,45\}$).

\subsection{Computational Results}
\label{sec:overall_results}

Tables~\ref{table_FGW_2A} to \ref{table:large-instances} summarize the computational results, grouped by $\delta$, as this parameter has the least impact on the performance of the tested algorithms.
For the sake of completeness and reproducibility, all detailed instance-by-instance results are made publicly available at
\url{https://github.com/regor-unimore/Quadratic-Bin-Packing-Problem}. In these tables, “Opt” reports the total number of instances solved to proven optimality, “Sec” denotes the average solution time in seconds, and “\%Gap” indicates the average percentage gap, computed as $100 \cdot  (\UB{} - \LB{})/\vert\UB{}\vert$, where $\UB{}$ is the value of the best solution found by the algorithm and $\LB{}$ is the maximum lower bound provided by the same algorithm. 

Table~\ref{table_FGW_2A} reports the results obtained on the small-sized instances using the compact formulations FGW and 2A, both with and without symmetry-breaking inequalities, as well as their enhanced versions with strengthened continuous relaxations. Without symmetry breaking, FGW does not solve any instance to proven optimality within the time limit, while 2A solves only 4 instances. In contrast, the strengthened versions FGW$^{\text{+SB}}$ and 2A$^{\text{+SB}}$ significantly outperform their base counterparts, both in terms of solution time and number of instances solved to optimality, highlighting the effectiveness of symmetry-breaking inequalities. In particular, FGW$^{\text{+SB}}$ and 2A$^{\text{+SB}}$ solve 110 and 101 instances to proven optimality, respectively. The EFGW and E2A algorithms based on the enhanced formulations show a further improvement in performance, consistently solving more instances to optimality (122 and 111, respectively) with substantially lower solution times compared to both the base and the symmetry-breaking versions, confirming the benefit of strengthening the continuous relaxation. Table~\ref{table_R} shows that, while the base formulation R solves only 71 small-sized instances and has high runtime and gap, its enhanced version ER has a better  performance, solving 106 instances with lower computational effort, but remains less effective than EFGW and E2A. A comparative analysis of the compact formulations’ performance is presented in Figure~\ref{fig:performance_compact_25}, which plots the solution time against the number of instances solved and further highlights the clear advantage of the enhanced formulations, in terms of both efficiency and number of instances solved to proven optimality.

\begin{table}[!]
\small
\caption{Computational results on small-sized instances for FGW, FGW$^{\text{+SB}}$, EFGW, 2A, 2A$^{\text{+SB}}$, and E2A.}\label{table_FGW_2A}
\centering
\setlength{\tabcolsep}{2.pt}
\resizebox{\columnwidth}{!}{%
\begin{tabular}{cccccrrcrrcrrcrrcrrcrrcrr}
\toprule
 & &  & 
& \multicolumn{3}{c}{FGW} 
& \multicolumn{3}{c}{FGW$^{\text{+SB}}$} 
& \multicolumn{3}{c}{EFGW} 
&   % colonna vuota
& \multicolumn{3}{c}{2A} 
& \multicolumn{3}{c}{2A$^{\text{+SB}}$} 
& \multicolumn{3}{c}{E2A} \\ 

\cmidrule(lr){5-7} \cmidrule(lr){8-10} \cmidrule(lr){11-13} 
\cmidrule(lr){15-17} \cmidrule(lr){18-20} \cmidrule(lr){21-23}
$n$ & $\sigma$ & $\mu$ & $\#$Tot
& Opt & Sec & \%Gap
& Opt & Sec & \%Gap
& Opt & Sec & \%Gap
&   % colonna vuota
& Opt & Sec & \%Gap
& Opt & Sec & \%Gap
& Opt & Sec & \%Gap \\
\cmidrule(lr){1-3}
\cmidrule(lr){4-4}
\cmidrule(lr){5-13}
\cmidrule(lr){15-23}
25&$\pm$&0.6&15&0&3600.0&66.9&\textbf{15}&109.2&0.0&\textbf{15}&180.5&0.0&&0&3600.0&86.4&14&840.8&0.4&\textbf{15}&619.0&0.0\\
25&$\pm$&1.0&15&0&3600.0&108.8&\textbf{15}&215.3&0.0&\textbf{15}&111.8&0.0&&0&3600.0&137.1&11&1466.4&5.3&\textbf{14}&618.9&0.6\\
25&$\pm$&2.0&15&0&3600.0&116.0&\textbf{15}&56.2&0.0&\textbf{15}&3.1&0.0&&0&3600.0&136.1&\textbf{15}&81.7&0.0&\textbf{15}&39.3&0.0\\
25&$+$&0.6&15&0&3600.0&41.4&\textbf{15}&272.3&0.0&\textbf{15}&277.9&0.0&&1&3561.0&40.0&\textbf{14}&298.9&0.3&\textbf{14}&309.3&0.3\\
25&$+$&1.0&15&0&3600.0&11.6&\textbf{12}&1460.8&1.0&11&1808.2&1.7&&1&3537.0&12.4&10&1609.2&2.3&\textbf{11}&1747.7&1.5\\
25&$+$&2.0&15&0&3600.0&7.2&13&971.0&0.8&\textbf{14}&323.5&0.5&&2&3120.1&3.5&\textbf{14}&349.3&0.5&\textbf{14}&361.1&0.5\\
25&$-$&0.6&15&0&3600.0&106.4&10&1691.1&6.7&\textbf{15}&770.7&0.0&&0&3600.0&120.8&8&2123.8&14.3&\textbf{13}&1463.8&0.6\\
25&$-$&1.0&15&0&3600.0&697.3&4&2937.4&501.8&\textbf{9}&2602.2&34.9&&0&3600.0&702.7&\textbf{2}&3375.0&460.8&\textbf{2}&3368.0&291.3\\
25&$-$&2.0&15&0&3600.0&18.0&11&1308.2&1.5&\textbf{13}&546.7&0.7&&0&3600.0&14.5&\textbf{13}&1089.6&0.9&\textbf{13}&695.4&0.9\\
\cmidrule(lr){1-3} \cmidrule(lr){4-4} 
\cmidrule(lr){5-7} \cmidrule(lr){8-10} \cmidrule(lr){11-13} 
\cmidrule(lr){15-17} \cmidrule(lr){18-20} \cmidrule(lr){21-23}
\multicolumn{3}{l}{Avg./Total} &135
&0&3600.0&130.4
&110&1002.4&56.9
&\textbf{122}&736.1&4.2
&
&4&3535.4&139.3
&101&1248.3&53.9
&\textbf{111}&1024.7&32.9 \\
\bottomrule
\end{tabular}%
}
\end{table}

\begin{figure}[!htbp]
\centering
\begin{minipage}{0.48\textwidth}
    \centering
    \small
    \captionof{table}{Computational results on small-sized instances for R and ER.}\label{table_R}
    \setlength{\tabcolsep}{3pt}
    \resizebox{\columnwidth}{!}{%
    \begin{tabular}{cccccrrcrr}
    \toprule
     & &  & 
    & \multicolumn{3}{c}{R} 
    & \multicolumn{3}{c}{ER} \\ 
    \cmidrule(lr){5-7} \cmidrule(lr){8-10}
    $n$ & $\sigma$ & $\mu$ & $\#$Tot
    & Opt & Sec & \%Gap
    & Opt & Sec & \%Gap \\
    \cmidrule(lr){1-3} \cmidrule(lr){4-4}
    \cmidrule(lr){5-7} \cmidrule(lr){8-10} 
    25&$\pm$&0.6&15&\textbf{15}&12.3&0.0&\textbf{15}&17.5&0.0\\
    25&$\pm$&1.0&15&\textbf{15}&349.6&0.0&\textbf{15}&244.2&0.0\\
    25&$\pm$&2.0&15&10&1406.5&7.0&\textbf{15}&391.3&0.0\\
    25&$+$&0.6&15&10&1452.5&6.1&\textbf{14}&291.5&0.2\\
    25&$+$&1.0&15&0&3600.0&28.5&\textbf{10}&1626.2&1.8\\
    25&$+$&2.0&15&10&2317.2&10.2&\textbf{15}&487.3&0.0\\
    25&$-$&0.6&15&8&2143.4&6.4&\textbf{9}&1930.4&5.5\\
    25&$-$&1.0&15&0&3600.0&403.4&\textbf{2}&3508.8&286.6\\
    25&$-$&2.0&15&3&2968.0&17.9&\textbf{11}&1859.1&2.2\\
    \cmidrule(lr){1-3} \cmidrule(lr){4-4}
    \cmidrule(lr){5-7} \cmidrule(lr){8-10} 
    \multicolumn{3}{l}{Avg./Total}&135&71&1983.3&53.3&\textbf{106}&1150.7&32.9\\
    \bottomrule
    \end{tabular}%
    }
\end{minipage}
\hfill
\begin{minipage}{0.48\textwidth}
    \centering
    \vspace{0.4cm} % spazio verticale per allineare con la tabella
    \captionof{figure}{Overall performance  on small-sized instances for the compact formulations.}
    \label{fig:performance_compact_25}
    % \resizebox{\textwidth}{!}{\input{fig/performance_compact_25.pgf}}
    \includegraphics[width=\textwidth]{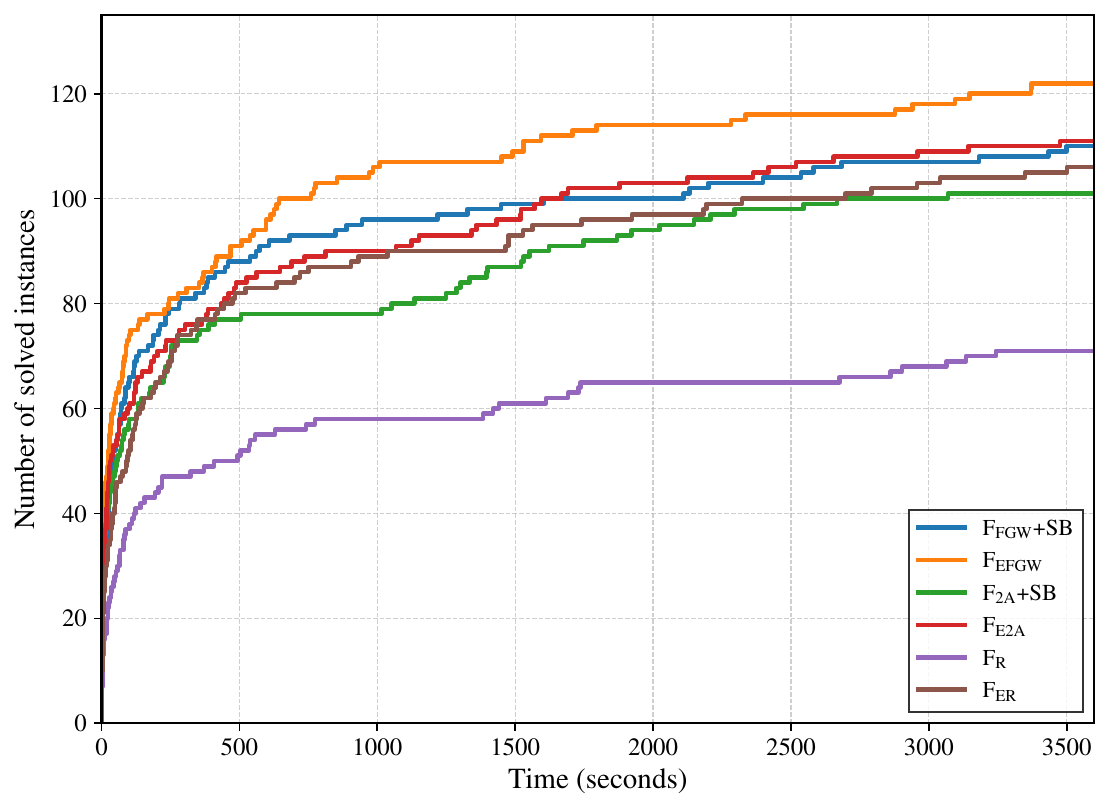}
\end{minipage}
\end{figure}
In this first round of experiments, a clear effect of the parameter $\sigma$ is also  observed. In particular, instances with $\sigma = -$ are consistently the most challenging ones across all tested algorithms, as indicated by the lower number of instances solved to proven optimality and the substantially higher average gaps and solution times. This is due to the presence of negative dissimilarity values in the objective function, which can drive the lower bound well below zero, increasing problem difficulty and potentially resulting in gaps larger than 100\%. By contrast, for instances with $\sigma = +$, gaps cannot exceed 100\%, as the lower bound provided by the algorithms is always non-negative. Figure~\ref{Fig_sigma_25} provides a visual summary of the performance of the compact formulations on the small-sized instances across the three values of $\sigma$. The figure shows that the enhanced formulations (EFGW, E2A, and ER) consistently solve more instances to proven optimality across all three values of $\sigma$, with the difference being most pronounced for the most challenging cases (i.e., $\sigma = -$). In addition to the effect of $\sigma$, the parameter $\mu$ also influences problem difficulty. 
For a fixed value of $\sigma$, instances with $\mu = 1.0$ exhibit longer computational times and larger gaps. 
This behavior can be explained by the fact that, since $\mu$ controls the relative weight of the two components of the QBPP objective function, the case $\mu = 1.0$ corresponds to a more balanced contribution of the bin-usage and the average dissimilarity among items within a bin. 

Based on the results presented so far and the superior effectiveness of EFGW, E2A, and ER, the subsequent analysis focuses exclusively on these enhanced compact formulations for comparison with the proposed B\&P approaches.

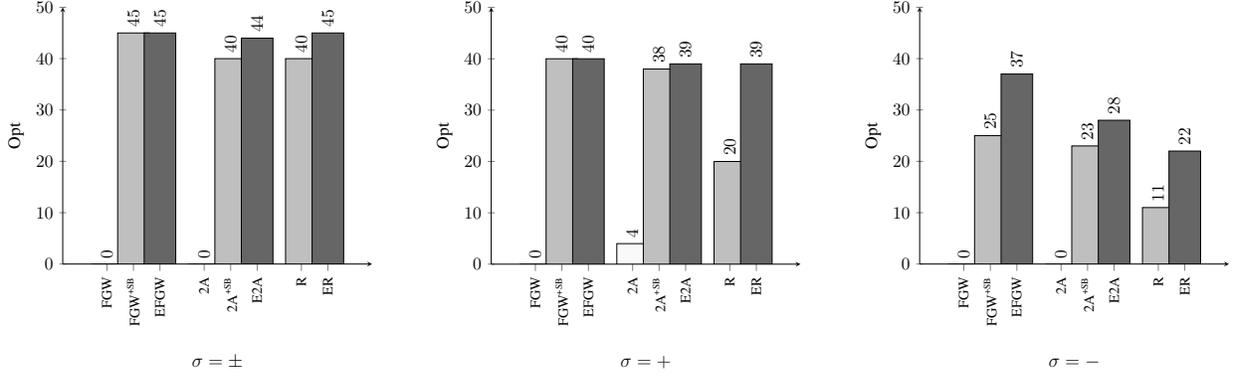
\begin{figure}[h!]
\centering
\begin{minipage}{0.31\textwidth}
    \centering
    \pgfkeys{/pgf/number format/.cd,1000 sep={}}
    \begin{tikzpicture}[scale=0.6]
\begin{axis}[
    xlabel={\large{$\sigma=\pm$}},
    xlabel style={yshift=-1em},
    ylabel={Opt},
    xtick={1, 2.5, 4, 6.5, 8, 9.5, 12, 13.5},
    ymax=50,
    xticklabels={\footnotesize{FGW}, \footnotesize{FGW$^{\text{+SB}}$},\footnotesize{EFGW}, \footnotesize{2A}, \footnotesize{2A$^{\text{+SB}}$},\footnotesize{E2A},\footnotesize{R},\footnotesize{ER}},
    x tick label style={rotate=90, anchor=east},
    bar width=0.7cm,
    nodes near coords, 
    nodes near coords align={vertical},
    nodes near coords style={rotate=90, anchor=west}, % <-- rotate numbers vertically
    axis x line=bottom,
    enlarge y limits=0.1,
    enlarge x limits=0.2,
    ybar=-0.7cm,
    ymin=0, 
    axis y line=left,
]

\addplot[ybar,fill=gray!5] coordinates {
(1,	0)
(6.5,	0)

};
\addplot[ybar,fill=gray!50] coordinates {
(2.5,	45)
(8,	40)
(12,	40)

};    
\addplot[ybar,fill=gray!120] coordinates {
(4, 45)
(9.5, 44)
(13.5, 45)
};

\end{axis}
\end{tikzpicture}
\end{minipage}\hfill
\begin{minipage}{0.31\textwidth}
    \centering
    \pgfkeys{/pgf/number format/.cd,1000 sep={}}
\begin{tikzpicture}[scale=0.6]
\begin{axis}[
    xlabel={\large{$\sigma=+$}},
    xlabel style={yshift=-1em},
    ylabel={Opt},
    xtick={1, 2.5, 4, 6.5, 8, 9.5, 12, 13.5},
    ymax=50,
    xticklabels={\footnotesize{FGW}, \footnotesize{FGW$^{\text{+SB}}$},\footnotesize{EFGW}, \footnotesize{2A}, \footnotesize{2A$^{\text{+SB}}$},\footnotesize{E2A},\footnotesize{R},\footnotesize{ER}},
    x tick label style={rotate=90, anchor=east},
    bar width=0.7cm,
    nodes near coords, 
    nodes near coords align={vertical},
    nodes near coords style={rotate=90, anchor=west}, % <-- rotate numbers vertically
    axis x line=bottom,
    enlarge y limits=0.1,
    enlarge x limits=0.2,
    ybar=-0.7cm,
    ymin=0, 
    axis y line=left,
]

\addplot[ybar,fill=gray!5] coordinates {
(1,	0)
(6.5,	4)
};
\addplot[ybar,fill=gray!50] coordinates {
(2.5,	40)
(8,	38)
(12,	20)

};    
\addplot[ybar,fill=gray!120] coordinates {
(4, 40)
(9.5, 39)
(13.5, 39)

};

\end{axis}
\end{tikzpicture}

\end{minipage}\hfill
\begin{minipage}{0.31\textwidth}
    \centering
    \pgfkeys{/pgf/number format/.cd,1000 sep={}}
\begin{tikzpicture}[scale=0.6]
\begin{axis}[
    xlabel={\large{$\sigma=-$}},
    xlabel style={yshift=-1em},
    ylabel={Opt},
    xtick={1, 2.5, 4, 6.5, 8, 9.5, 12, 13.5},
    ymax=50,
    xticklabels={\footnotesize{FGW}, \footnotesize{FGW$^{\text{+SB}}$},\footnotesize{EFGW}, \footnotesize{2A}, \footnotesize{2A$^{\text{+SB}}$},\footnotesize{E2A},\footnotesize{R},\footnotesize{ER}},
    x tick label style={rotate=90, anchor=east},
    bar width=0.7cm,
    nodes near coords, 
    nodes near coords align={vertical},
    nodes near coords style={rotate=90, anchor=west}, % <-- rotate numbers vertically
    axis x line=bottom,
    enlarge y limits=0.1,
    enlarge x limits=0.2,
    ybar=-0.7cm,
    ymin=0, 
    axis y line=left,
]

\addplot[ybar,fill=gray!5] coordinates {
(1,	0)
(6.5,	0)};
\addplot[ybar,fill=gray!50] coordinates {
(2.5,	25)
(8,	23)
(12,	11)    
};    
\addplot[ybar,fill=gray!120] coordinates {
(4, 37)
(9.5, 28)
(13.5, 22)};

\end{axis}
\end{tikzpicture}
\end{minipage}
\caption{Number of small-sized instances solved to proven optimality by the implemented compact formulations across different values of $\sigma$.}
\label{Fig_sigma_25}
\end{figure}

Table~\ref{Table2} reports the computational results on the small- to medium-sized instances for the enhanced compact formulations and the two B\&P (i.e., B\&P$^{(1,1)}$ and B\&P$^{(5,10)}$). Among the enhanced compact formulations, EFGW performs best, achieving the lowest computational times and smallest gaps, and solving 222 out of 405 instances to proven optimality. It is followed by ER, which solves 185 instances, and E2A, which solves 172 instances.
For the enhanced compact formulations, the trends regarding the impact of parameters $\sigma$ and $\mu$ observed on small-sized instances generally extend to the small- to medium-sized instances. However, as $n$ increases, the performance of these formulations clearly deteriorates: fewer instances with $n=35$ are solved to proven optimality, and both solution times and average gaps tend to increase. This underscores the sensitivity of these formulations to instance size and highlights their limited scalability.

By contrast, the B\&P algorithms exhibit robust performance across instance sizes. Both B\&P$^{(1,1)}$ and B\&P$^{(5,10)}$ solve nearly all small- to medium-sized instances, with very low computational times and small gaps. Instance size has only a minor impact on runtime, emphasizing the efficiency and scalability of the B\&P approach. Table~\ref{Table2} shows that the proposed MCH heuristic enhances the performance of the B\&P algorithm: B\&P$^{(5,10)}$ solves 389 out of 405 instances, compared to 386 solved by B\&P$^{(1,1)}$, while reducing the average runtime from 318.2 to 281.4 seconds and lowering the average optimality gap from 0.2\% to 0.1\%.
Figure~\ref{Fig_sigma_25_35} summarizes the performance of the compact formulations and the B\&P approaches in terms of the number of small- to medium-sized instances solved to proven optimality for the three considered values of $\sigma$. The figure illustrates that, unlike the compact formulations, the B\&P methods remain stable as $\sigma$ varies.

\begin{table}[!]
\small
\caption{Computational results for small- to medium-sized instances obtained with the enhanced compact formulations (EFGW, E2A, and ER) and the two B\&P implementations (B\&P$^{(1,1)}$ and B\&P$^{(5,10)}$), grouped by $\delta$.}
\centering
\setlength{\tabcolsep}{4pt}
\resizebox{\columnwidth}{!}{%
\begin{tabular}{cccrrrrrrrrrrrrrrrr}
\toprule
& & & 
& \multicolumn{3}{c}{EFGW} 
& \multicolumn{3}{c}{E2A} 
& \multicolumn{3}{c}{ER}
& \multicolumn{3}{c}{ B\&P$^{(1,1)}$}
& \multicolumn{3}{c}{B\&P$^{(5,10)}$} \\ 
\cmidrule(lr){5-7} 
\cmidrule(lr){8-10} 
\cmidrule(lr){11-13}
\cmidrule(lr){14-16}
\cmidrule(lr){17-19}
$n$ & $\sigma$ & $\mu$ & $\#$Tot 
& Opt & Sec & Gap\% 
& Opt & Sec & Gap\%
& Opt & Sec & Gap\%
& Opt & Sec & Gap\%
& Opt & Sec & Gap\% \\ 
\cmidrule(lr){1-3} \cmidrule(lr){4-4}
\cmidrule(lr){5-7} \cmidrule(lr){8-10} 
\cmidrule(lr){11-13} \cmidrule(lr){14-16}
\cmidrule(lr){17-19}
25&$\pm$&0.6&15&\textbf{15}&180.5&0.0&\textbf{15}&619.0&0.0&\textbf{15}&17.5&0.0&\textbf{15}&1.2&0.0&\textbf{15}&0.8&0.0\\
25&$\pm$&1.0&15&\textbf{15}&111.8&0.0&14&618.9&0.6&\textbf{15}&244.2&0.0&\textbf{15}&0.7&0.0&\textbf{15}&0.5&0.0\\
25&$\pm$&2.0&15&\textbf{15}&3.1&0.0&\textbf{15}&39.3&0.0&\textbf{15}&391.3&0.0&\textbf{15}&2.2&0.0&\textbf{15}&2.2&0.0\\
25&+&0.6&15&\textbf{15}&277.9&0.0&14&309.3&0.3&14&291.5&0.2&\textbf{15}&0.6&0.0&\textbf{15}&0.4&0.0\\
25&+&1.0&15&11&1808.2&1.7&11&1747.7&1.5&10&1626.2&1.8&\textbf{15}&35.7&0.0&\textbf{15}&39.9&0.0\\
25&+&2.0&15&14&323.5&0.5&14&361.1&0.5&\textbf{15}&487.3&0.0&11&963.0&1.2&11&962.2&1.2\\
25&-&0.6&15&\textbf{15}&770.7&0.0&13&1463.8&0.6&9&1930.4&5.5&\textbf{15}&18.4&0.0&\textbf{15}&15.2&0.0\\
25&-&1.0&15&9&2602.2&34.9&2&3368.0&291.3&2&3508.8&286.6&\textbf{15}&27.6&0.0&\textbf{15}&23.5&0.0\\
25&-&2.0&15&\textbf{13}&546.7&0.7&13&695.4&0.9&11&1859.1&2.2&11&960.3&1.4&11&960.3&1.3\\

\cmidrule(lr){1-3} \cmidrule(lr){4-4}
\cmidrule(lr){5-7} \cmidrule(lr){8-10} 
\cmidrule(lr){11-13} \cmidrule(lr){14-16}
\cmidrule(lr){17-19}
\multicolumn{3}{l}{Avg./Total} &135&122&736.1&4.2&111&1024.7&32.9&106&1150.7&32.9&\textbf{127}&223.3&0.3&\textbf{127}&222.8&0.3\\
\midrule
30&$\pm$&0.6&15&10&1900.0&12.8&8&2222.9&18.0&\textbf{15}&377.1&0.0&\textbf{15}&6.8&0.0&\textbf{15}&6.3&0.0\\
30&$\pm$&1.0&15&11&1690.9&4.2&5&2671.3&16.8&10&1681.3&5.7&\textbf{15}&5.6&0.0&\textbf{15}&3.2&0.0\\
30&$\pm$&2.0&15&\textbf{15}&270.4&0.0&9&1996.6&9.6&3&3123.8&91.1&\textbf{15}&63.4&0.0&\textbf{15}&54.9&0.0\\
30&+&0.6&15&11&1633.4&1.6&10&1335.8&2.2&10&1306.2&2.5&\textbf{15}&4.7&0.0&\textbf{15}&3.1&0.0\\
30&+&1.0&15&0&3600.0&9.1&4&2898.7&9.5&3&2993.4&10.6&\textbf{13}&781.5&0.5&\textbf{13}&805.3&0.4\\
30&+&2.0&15&11&1607.9&0.4&3&3092.5&0.8&1&3441.5&16.0&\textbf{14}&245.5&0.3&\textbf{14}&247.8&0.3\\
30&-&0.6&15&5&2965.5&12.2&4&3317.0&15.5&5&2966.3&14.7&\textbf{15}&161.9&0.0&\textbf{15}&119.1&0.0\\
30&-&1.0&15&1&3570.7&159.5&0&3600.0&234.9&0&3600.0&810.3&14&517.1&0.7&\textbf{15}&398.1&0.0\\
30&-&2.0&15&11&1587.1&0.6&1&3370.7&1.8&0&3600.0&30.7&\textbf{15}&92.3&0.0&\textbf{15}&65.2&0.0\\

\cmidrule(lr){1-3} \cmidrule(lr){4-4}
\cmidrule(lr){5-7} \cmidrule(lr){8-10} 
\cmidrule(lr){11-13} \cmidrule(lr){14-16}
\cmidrule(lr){17-19}
\multicolumn{3}{l}{Avg./Total} &135&75&2091.8&22.3&44&2722.8&34.3&47&2565.5&109.1&131&208.7&0.2&\textbf{132}&189.2&0.1\\
\midrule
35&$\pm$&0.6&15&5&2762.4&48.5&4&3031.1&47.7&11&1410.4&3.1&\textbf{15}&33.6&0.0&\textbf{15}&30.1&0.0\\
35&$\pm$&1.0&15&5&2935.8&26.2&1&3535.2&40.9&6&2336.2&14.9&\textbf{15}&15.3&0.0&\textbf{15}&9.3&0.0\\
35&$\pm$&2.0&15&9&1610.7&8.0&2&3302.6&56.4&3&3262.2&142.2&\textbf{15}&64.2&0.0&\textbf{15}&40.6&0.0\\
35&+&0.6&15&4&3176.9&12.1&10&2020.5&7.2&10&1938.6&7.3&\textbf{15}&85.4&0.0&\textbf{15}&67.3&0.0\\
35&+&1.0&15&0&3600.0&14.5&0&3600.0&15.5&0&3600.0&24.4&\textbf{15}&1066.1&0.0&\textbf{15}&981.7&0.0\\
35&+&2.0&15&1&3368.5&0.7&0&3600.0&1.1&0&3600.0&35.9&\textbf{12}&1050.8&0.1&11&1121.6&0.1\\
35&-&0.6&15&1&3514.2&35.4&0&3600.0&37.1&2&3325.4&25.1&13&1062.3&0.2&\textbf{14}&809.4&0.2\\
35&-&1.0&15&0&3600.0&113.2&0&3600.0&119.2&0&3600.0&250.0&13&1310.4&0.3&\textbf{15}&822.3&0.0\\
35&-&2.0&15&0&3600.0&3.6&0&3600.0&5.7&0&3600.0&59.0&\textbf{15}&15.6&0.0&\textbf{15}&7.1&0.0\\

\cmidrule(lr){1-3} \cmidrule(lr){4-4}
\cmidrule(lr){5-7} \cmidrule(lr){8-10} 
\cmidrule(lr){11-13} \cmidrule(lr){14-16}
\cmidrule(lr){17-19}
\multicolumn{3}{l}{Avg./Total} &135&25&3129.8&29.1&17&3321.0&36.7&32&2963.7&62.4&128&522.6&0.1&\textbf{130}&432.2&0.0\\
\midrule
\multicolumn{3}{l}{Overall} &405&222&1985.9&18.5&172&2356.2&34.6&185&2226.6&68.1&386&318.2&0.2&\textbf{389}&281.4&0.1\\
\bottomrule
\end{tabular}%
}
\label{Table2}
\end{table}

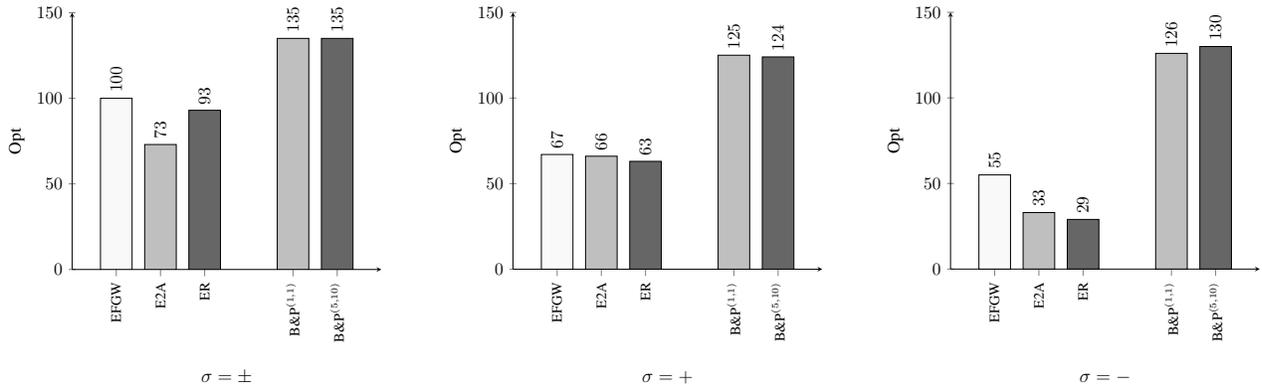
\begin{figure}[h!]
\centering
\begin{minipage}{0.31\textwidth}
    \centering
    \pgfkeys{/pgf/number format/.cd,1000 sep={}}

\begin{tikzpicture}[scale=0.6]
\begin{axis}[
    xlabel={\large{$\sigma=\pm$}},
    xlabel style={yshift=-1em},
    ylabel={Opt},
    xtick={1, 2, 3, 5, 6},
    ymax=150,
    xticklabels={\footnotesize{EFGW}, \footnotesize{E2A},\footnotesize{ER}, \footnotesize{B\&P$^{(1,1)}$},\footnotesize{B\&P$^{(5,10)}$}},
    x tick label style={rotate=90, anchor=east},
    bar width=0.7cm,
    nodes near coords, 
    nodes near coords align={vertical},
    nodes near coords style={rotate=90, anchor=west}, % <-- rotate numbers vertically
    axis x line=bottom,
    enlarge y limits=0.1,
    enlarge x limits=0.2,
    ybar=-0.7cm,
    ymin=0, 
    axis y line=left,
]

\addplot[ybar,fill=gray!5] coordinates {
(1,100)
};
\addplot[ybar,fill=gray!50] coordinates {
(2,73)
(5,135)
};    
\addplot[ybar,fill=gray!120] coordinates {
(6,135)
(3,93)};

\end{axis}
\end{tikzpicture}
\end{minipage}\hfill
\begin{minipage}{0.31\textwidth}
    \centering
    \pgfkeys{/pgf/number format/.cd,1000 sep={}}
\begin{tikzpicture}[scale=0.6]
\begin{axis}[
    xlabel={\large{$\sigma=+$}},
    xlabel style={yshift=-1em},
    ylabel={Opt},
    xtick={1, 2, 3, 5, 6},
    ymax=150,
    xticklabels={\footnotesize{EFGW}, \footnotesize{E2A},\footnotesize{ER}, \footnotesize{B\&P$^{(1,1)}$},\footnotesize{B\&P$^{(5,10)}$}},
    x tick label style={rotate=90, anchor=east},
    bar width=0.7cm,
    nodes near coords, 
    nodes near coords align={vertical},
    nodes near coords style={rotate=90, anchor=west}, % <-- rotate numbers vertically
    axis x line=bottom,
    enlarge y limits=0.1,
    enlarge x limits=0.2,
    ybar=-0.7cm,
    ymin=0, 
    axis y line=left,
]

\addplot[ybar,fill=gray!5] coordinates {
(1,67)

};
\addplot[ybar,fill=gray!50] coordinates {
(2,66)
(5,125)

};    
\addplot[ybar,fill=gray!120] coordinates {
(3,63)
(6,124)
};

\end{axis}
\end{tikzpicture}

\end{minipage}\hfill
\begin{minipage}{0.29\textwidth}
    \centering
    \pgfkeys{/pgf/number format/.cd,1000 sep={}}
\begin{tikzpicture}[scale=0.6]
\begin{axis}[
    xlabel={\large{$\sigma=-$}},
    xlabel style={yshift=-1em},
    ylabel={Opt},
    xtick={1, 2, 3, 5, 6},
    ymax=150,
    xticklabels={\footnotesize{EFGW}, \footnotesize{E2A},\footnotesize{ER}, \footnotesize{B\&P$^{(1,1)}$},\footnotesize{B\&P$^{(5,10)}$}},
    x tick label style={rotate=90, anchor=east},
    bar width=0.7cm,
    nodes near coords, 
    nodes near coords align={vertical},
    nodes near coords style={rotate=90, anchor=west}, % <-- rotate numbers vertically
    axis x line=bottom,
    enlarge y limits=0.1,
    enlarge x limits=0.2,
    ybar=-0.7cm,
    ymin=0, 
    axis y line=left,
]

\addplot[ybar,fill=gray!5] coordinates {
(1,55)

};
\addplot[ybar,fill=gray!50] coordinates {
(2,33)
(5,126)
};    
\addplot[ybar,fill=gray!120] coordinates {
(3,29)
(6,130)
};

\end{axis}
\end{tikzpicture}
\end{minipage}
\caption{Number of small- to medium-sized instances solved to proven optimality by the compact enhanced formulations and the two B\&P implementations, for different values of $\sigma$.}
\label{Fig_sigma_25_35}
\end{figure}

\cref{table:large-instances} reports the results for the large-sized instances.
Since the enhanced compact formulations already showed limited performance for instances with $n = 35$, we present results only for the B\&P algorithms.
In addition to the columns included in the previous tables, the column “\#Nodes” indicates the average number of nodes explored during the B\&P procedure.
To better assess the impact of the proposed MCH heuristic, we also report, for each column, the improvement achieved by B\&P$^{(5,10)}$ over B\&P$^{(1,1)}$, calculated as the difference between the corresponding values.
The results indicate that the inclusion of the MCH heuristic improves performance across nearly all instance types.
The employment of the MCH led to a worse performance in only three instance groups, slightly increasing the average computation time. Among these, the increased running time negatively affected the number of solved instances in only one group ($n=45$, $\sigma=-$, $\mu=0.6$), where one fewer instance was solved.
For the remaining instance groups, the improvements in terms of solved instances, runtime, optimality gap, and number of explored nodes were consistent. Overall, B\&P$^{(5,10)}$ solved nine additional instances and reduced the average computation time by 111 seconds, confirming the effectiveness of the proposed MCH heuristic.

\cref{Fig_sigma_40_45} illustrates the number of instances solved by the B\&P algorithms for each combination of $\sigma$ and $\mu$. For the large-sized instances, unlike the small-sized ones, the effect of $\mu$ on instance difficulty appears to be strongly dependent on the value of $\sigma$. Specifically, for instances with $\sigma = +$, the problem becomes harder as $\mu$ increases, whereas for $\sigma = -$, the opposite trend is observed, with difficulty increasing as $\mu$ decreases.
This behavior can be explained as follows. For instances with $\sigma = +$, optimal solutions tend to separate items as much as possible due to the dissimilarity values. As $\mu$ increases, the cost of using bins rises, strengthening the conflict between minimizing dissimilarity and minimizing bin cost. Consequently, balancing these competing objectives becomes increasingly challenging.
Conversely, for instances with $\sigma = -$, optimal solutions tend to group many items into the same bin. As $\mu$ decreases, the relative importance of bin cost reduces compared to the dissimilarity costs, and the bin capacity increases, leading to a larger number of feasible configurations in the set-partitioning formulation. Since obtaining strong dual bounds for the quadratic component is substantially more challenging than for the bin packing component, the combination of a more influential quadratic term and an increased number of feasible configurations further increases the overall difficulty of solving these instances.

\begin{table}[!]
\small
\caption{Computational results for large-sized instances obtained by B\&P$^{(1,1)}$ and B\&P$^{(5,10)}$, grouped by $\delta$.}
\centering
\setlength{\tabcolsep}{5.5pt}
\resizebox{\columnwidth}{!}{%
\begin{tabular}{cccrrrrrrrrrrrrr}
\toprule
& & & 
& \multicolumn{4}{c}{B\&P$^{(1,1)}$}
& \multicolumn{4}{c}{B\&P$^{(5,10)}$}
& \multicolumn{4}{c}{Improvements} \\ 
\cmidrule(lr){5-8} 
\cmidrule(lr){9-12}
\cmidrule(lr){13-16}
$n$ & $\sigma$ & $\mu$ & $\#$Tot
& Opt & Sec & \%Gap & \#Nodes
& Opt & Sec & \%Gap & \#Nodes
& Opt & Sec & \%Gap  & \#Nodes  \\ 
\cmidrule(lr){1-3} \cmidrule(lr){4-4}
\cmidrule(lr){5-8} \cmidrule(lr){9-12}
\cmidrule(lr){13-16}

40&$\pm$&0.6&15&\textbf{15}&440.7&0.0&4.7&\textbf{15}&287.4&0.0&4.1&0&-153.3&0.0&-0.7\\
40&$\pm$&1.0&15&\textbf{15}&254.3&0.0&26.6&\textbf{15}&171.0&0.0&25.3&0&-83.3&0.0&-1.3\\
40&$\pm$&2.0&15&\textbf{15}&261.5&0.0&1571.0&\textbf{15}&135.6&0.0&1639.1&0&-125.9&0.0&+68.1\\
40&$+$&0.6&15&\textbf{13}&596.7&0.7&1008.9&\textbf{13}&752.9&1.0&1844.9&0&+156.2&+0.3&+836.0\\
40&$+$&1.0&15&13&1428.7&1.8&3880.5&\textbf{14}&1250.4&1.0&4885.1&+1&-178.3&-0.8&+1004.6\\
40&$+$&2.0&15&\textbf{9}&1658.1&0.2&68284.5&\textbf{9}&1636.9&0.0&55197.1&0&-21.2&-0.2&-13087.3\\
40&$-$&0.6&15&4&2914.7&109.5&6.8&\textbf{5}&2833.5&1.0&7.9&+1&-81.3&-108.5&+1.1\\
40&$-$&1.0&15&5&2934.6&1.9&126.7&\textbf{10}&2538.5&2.0&259.2&+5&-396.1&+0.1&+132.5\\
40&$-$&2.0&15&\textbf{15}&324.0&0.0&997.8&\textbf{15}&154.3&0.0&1045.8&0&-169.7&0.0&+48.0\\

\cmidrule(lr){1-3} \cmidrule(lr){4-4}
\cmidrule(lr){5-8} \cmidrule(lr){9-12}
\cmidrule(lr){13-16}

\multicolumn{3}{l}{Avg./Total} 
&135&104&1201.5&12.7&8434.2&\textbf{111}&1084.5&0.6&7212.1&+7&-117.0&-12.1&-1222.1\\

\midrule
45&$\pm$&0.6&15&\textbf{10}&1775.4&19.3&2.3&\textbf{10}&1643.0&0.0&2.3&0&-132.4&-19.3&0.0\\
45&$\pm$&1.0&15&\textbf{15}&764.5&0.0&18.1&\textbf{15}&584.9&0.0&18.3&0&-179.6&0.0&+0.3\\
45&$\pm$&2.0&15&\textbf{15}&256.7&0.0&435.9&\textbf{15}&132.3&0.0&459.4&0&-124.4&0.0&+23.5\\
45&$+$&0.6&15&\textbf{12}&1283.3&1.0&1012.5&\textbf{12}&1303.6&1.0&1282.0&0&+20.3&+0.1&+269.5\\
45&$+$&1.0&15&\textbf{10}&1668.6&4.6&1129.5&\textbf{10}&1574.6&5.0&1260.5&0&-94.1&+0.4&+131.1\\
45&$+$&2.0&15&\textbf{8}&1873.6&0.2&43824.8&\textbf{8}&1854.6&0.0&37441.7&0&-19.0&-0.2&-6383.1\\
45&$-$&0.6&15&\textbf{2}&3541.0&344.8&2.1&1&3561.2&358.0&1.9&-1&+20.2&+13.2&-0.3\\
45&$-$&1.0&15&1&3395.7&9.6&11.9&\textbf{3}&3326.6&5.0&19.8&+2&-69.1&-4.6&+7.9\\
45&$-$&2.0&15&13&933.7&0.0&397.0&\textbf{14}&566.4&0.0&651.2&+1&-367.3&0.0&+254.2\\

\cmidrule(lr){1-3} \cmidrule(lr){4-4}
\cmidrule(lr){5-8} \cmidrule(lr){9-12}
\cmidrule(lr){13-16}

\multicolumn{3}{l}{Avg./Total} 
&135&86&1721.4&42.2&5203.8&\textbf{88}&1616.3&41.0&4570.8&+2&-105.0&-1.2&-633.0\\

\midrule
\multicolumn{3}{l}{Overall} 
&270&190&1461.4&27.4&6819.0&\textbf{199}&1350.4&20.8&5891.4&+9&-111.0&-6.6&-927.6\\

\bottomrule
\end{tabular}%
}
\label{table:large-instances}
\end{table}

\begin{figure}[h!]
\centering
\begin{minipage}{0.31\textwidth}
    \centering
    \pgfkeys{/pgf/number format/.cd,1000 sep={}}

\begin{tikzpicture}[scale=0.6]
\begin{axis}[
    xlabel={\large{$\sigma=\pm$}},
    xlabel style={yshift=-1em},
    ylabel={Opt},
    xtick={1, 2, 4, 5, 7,8},
    ymax=30,
    xticklabels={\footnotesize{B\&P$^{(1,1)}$},\footnotesize{B\&P$^{(5,10)}$},\footnotesize{B\&P$^{(1,1)}$},\footnotesize{B\&P$^{(5,10)}$}, \footnotesize{B\&P$^{(1,1)}$},\footnotesize{B\&P$^{(5,10)}$}},
    x tick label style={rotate=90, anchor=east},
    bar width=0.7cm,
    nodes near coords, 
    nodes near coords align={vertical},
    nodes near coords style={rotate=90, anchor=west}, % <-- rotate numbers vertically
    axis x line=bottom,
    enlarge y limits=0.1,
    enlarge x limits=0.2,
    ybar=-0.7cm,
    ymin=0, 
    axis y line=left,
]

\addplot[ybar,fill=gray!5] coordinates {
(1,	25)
(4,	30)
(7,	30)

};
\addplot[ybar,fill=gray!50] coordinates {
(2,	25)
(5,	30)
(8,	30)

};    

\addplot+[
    only marks, 
    mark size=5pt, 
    black, 
    nodes near coords, 
    point meta=explicit symbolic, 
    every node near coord/.append style={
        anchor=west,    % posizione del testo rispetto al marker
        font=\footnotesize,
        rotate=0       % ruota le scritte di 90°
    }
] 
coordinates {
    (2,2)[$\mu=0.6$]
    (1,2)[$\mu=0.6$]
    (4,2)[$\mu=1.0$]
    (5,2)[$\mu=1.0$]
    (7,2)[$\mu=2.0$]
    (8,2)[$\mu=2.0$]

};

\end{axis}
\end{tikzpicture}
\end{minipage}\hfill
\begin{minipage}{0.31\textwidth}
    \centering
    \pgfkeys{/pgf/number format/.cd,1000 sep={}}
\begin{tikzpicture}[scale=0.6]
\begin{axis}[
    xlabel={\large{$\sigma=+$}},
    xlabel style={yshift=-1em},
    ylabel={Opt},
    xtick={1, 2, 4, 5, 7,8},
    ymax=30,
    xticklabels={\footnotesize{B\&P$^{(1,1)}$},\footnotesize{B\&P$^{(5,10)}$},\footnotesize{B\&P$^{(1,1)}$},\footnotesize{B\&P$^{(5,10)}$}, \footnotesize{B\&P$^{(1,1)}$},\footnotesize{B\&P$^{(5,10)}$}},
    x tick label style={rotate=90, anchor=east},
    bar width=0.7cm,
    nodes near coords, 
    nodes near coords align={vertical},
    nodes near coords style={rotate=90, anchor=west}, % <-- rotate numbers vertically
    axis x line=bottom,
    enlarge y limits=0.1,
    enlarge x limits=0.2,
    ybar=-0.7cm,
    ymin=0, 
    axis y line=left,
]

\addplot[ybar,fill=gray!5] coordinates {
(1,	25)
(4,	23)
(7,	17)

};
\addplot[ybar,fill=gray!50] coordinates {
(2,	25)
(5,	24)
(8,	17)

};    

\addplot+[
    only marks, 
    mark size=5pt, 
    black, 
    nodes near coords, 
    point meta=explicit symbolic, 
    every node near coord/.append style={
        anchor=west,    % posizione del testo rispetto al marker
        font=\footnotesize,
        rotate=0       % ruota le scritte di 90°
    }
] 
coordinates {
    (2,2)[$\mu=0.6$]
    (1,2)[$\mu=0.6$]
    (4,2)[$\mu=1.0$]
    (5,2)[$\mu=1.0$]
    (7,2)[$\mu=2.0$]
    (8,2)[$\mu=2.0$]

};

\end{axis}
\end{tikzpicture}
\end{minipage}\hfill
\begin{minipage}{0.29\textwidth}
    \centering
    \pgfkeys{/pgf/number format/.cd,1000 sep={}}
\begin{tikzpicture}[scale=0.6]
\begin{axis}[
    xlabel={\large{$\sigma=-$}},
    xlabel style={yshift=-1em},
    ylabel={Opt},
    xtick={1, 2, 4, 5, 7,8},
    ymax=30,
    xticklabels={\footnotesize{B\&P$^{(1,1)}$},\footnotesize{B\&P$^{(5,10)}$},\footnotesize{B\&P$^{(1,1)}$},\footnotesize{B\&P$^{(5,10)}$}, \footnotesize{B\&P$^{(1,1)}$},\footnotesize{B\&P$^{(5,10)}$}},
    x tick label style={rotate=90, anchor=east},
    bar width=0.7cm,
    nodes near coords, 
    nodes near coords align={vertical},
    nodes near coords style={rotate=90, anchor=west}, % <-- rotate numbers vertically
    axis x line=bottom,
    enlarge y limits=0.1,
    enlarge x limits=0.2,
    ybar=-0.7cm,
    ymin=0, 
    axis y line=left,
]

\addplot[ybar,fill=gray!5] coordinates {
(1,	6)
(4,	6)
(7,	28)

};
\addplot[ybar,fill=gray!50] coordinates {
(2,	6)
(5,	13)
(8,	29)
};    

\addplot+[
    only marks, 
    mark size=5pt, 
    black, 
    nodes near coords, 
    point meta=explicit symbolic, 
    every node near coord/.append style={
        anchor=west,    % posizione del testo rispetto al marker
        font=\scriptsize,
        rotate=0       % ruota le scritte di 90°
    }
] 
coordinates {
    (2,0)[$\mu=0.6$]
    (1,0)[$\mu=0.6$]
    (4,0)[$\mu=1.0$]
    (5,0)[$\mu=1.0$]

};

\addplot+[
    only marks, 
    mark size=5pt, 
    black, 
    nodes near coords, 
    point meta=explicit symbolic, 
    every node near coord/.append style={
        anchor=west,    % posizione del testo rispetto al marker
        font=\footnotesize,
        rotate=0       % ruota le scritte di 90°
    }
] 
coordinates {
    (7,2)[$\mu=2.0$]
    (8,2)[$\mu=2.0$]

};

\end{axis}
\end{tikzpicture}
\end{minipage}
\caption{Comparison of the two B\&P implementations in terms of large-sized instances solved to proven optimality across $\sigma$ and $\mu$ values.}
\label{Fig_sigma_40_45}
\end{figure}
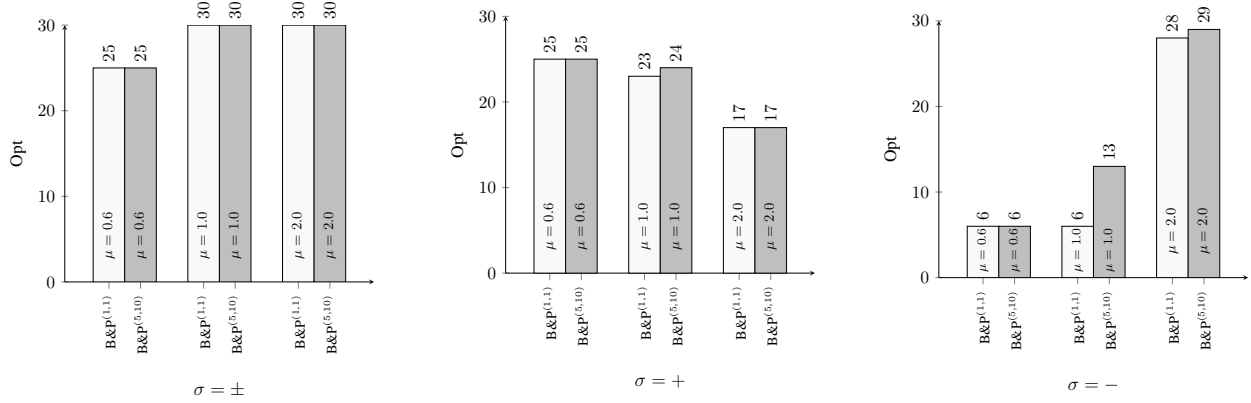

\section{Conclusions and Future Research Directions}
\label{sec:conclusion}

In this article, we introduced and analyzed the QBPP, a generalization of the classical BPP that incorporates a fixed cost for each used bin and pairwise costs (or profits) associated with items packed together. The QBPP is of both theoretical and practical significance, as it captures features relevant to real-world applications, such as those arising in cluster analysis. 

To address the QBPP, we proposed three compact MILP formulations, including strengthened variants, as well as a set-partitioning formulation solved via a tailored B\&P algorithm. The pricing problem arising in the column generation phase of the proposed B\&P algorithm corresponds to a variant of the QKP with potentially negative objective coefficients. To solve it efficiently, we developed a constructive heuristic capable of generating multiple high-quality solutions in a short time, complemented by a tailored exact combinatorial B\&B algorithm.

Given the novelty of the QBPP, we generated a benchmark set of instances, which we made publicly available along with the computational results to facilitate and encourage future research. Experiments on this benchmark show that while the compact formulations perform well on small instances, they scale poorly. In contrast, the B\&P algorithm provides a more robust performance across different instance types, efficiently solving larger instances and delivering stronger bounds, highlighting the clear advantage of tailored solution methods over general-purpose compact formulations, particularly as instance size increases. 
Moreover, the analysis of the computational results revealed that the hardness of the instances is strongly influenced by the interaction between the parameters $\sigma$ and $\mu$.
This observation suggests that different subclasses of instances exhibit clearly distinct structural properties. A deeper analysis of these characteristics, together with the design of solution methods tailored to specific combinations of $\sigma$ and $\mu$, could therefore yield further performance improvements.

Other future research directions include developing additional valid inequalities to further strengthen the proposed formulations and their separation procedures, exploring alternative approaches for solving the pricing problem (e.g., the $t$-linearization technique; see \citealt{Soares20212879}), and designing tailored column-stabilization methods to improve the convergence of CG within the B\&P framework (see, e.g., \citealt{CLAUTIAUX2025707}). Another promising avenue is the development of heuristic approaches, particularly for large instances that may arise in real-world applications. %Finally, extending the problem to accommodate item demands could further enhance both its theoretical and practical relevance.

\section*{Data availability}
The data used in this research are available at
\url{https://github.com/regor-unimore/Quadratic-Bin-Packing-Problem}.

\section*{Acknowledgments}

We gratefully acknowledge financial support from the European Union – NextGenerationEU, Mission 4 “Education and Research”, Component 1, Investment 3.4 “University Education and Advanced Skills”, Transnational Education Initiatives (TNE), Project “Green\&Pink for Sustainable Education” (CUP D74G23000280006); from the Emilia-Romagna regional funding program FSE+ 2021–2027 (Council Resolution No. 693/2023).
This study was financed, in part, by
the São Paulo Research Foundation (FAPESP), Brasil, proc.~\mbox{2022/05803-3},
the Coordenação de Aperfeiçoamento de Pessoal de Nível Superior - Brasil (CAPES) - Finance Code 001, proc.~\mbox{88882.329100/2014-01},
and the Conselho Nacional de Desenvolvimento Científico e Tecnológico (CNPq), proc.~\mbox{163645/2021-3}, \mbox{200503/2025-1}, \mbox{313146/2022-5}, \mbox{404315/2023-2}, \mbox{404481/2024-8} and \mbox{404779/2025-5}.

% References
\bibliographystyle{abbrvnat}
\bibliography{bib}

\end{document}